\title[Sharp $L^p$-estimates for wave equation on $ax+b$ groups]{Sharp $L^p$-estimates for wave equation\\
on $ax+b$ groups}
\author[Y. Wang and L. Yan]{Yunxiang Wang and Lixin Yan}
\address{Yunxiang Wang, Department of Mathematics, Sun Yat-sen University, Guangzhou, 510275, P. R. China}
\email{\href{mailto: wangyx93@mail2.sysu.edu.cn}{wangyx93@mail2.sysu.edu.cn}}
\address{Lixin Yan, Department of Mathematics, Sun Yat-sen University, Guangzhou, 510275, P. R. China}
\email{\href{mailto: mcsylx@mail.sysu.edu.cn}{mcsylx@mail.sysu.edu.cn}}
\date{\today}
\subjclass[2020]{43A85, 22E30, 42B15, 35L20}
\keywords{Wave equation, sharp $L^p$-estimates, $ax+b$ groups, Hardy space, Fourier integral operators.}
\numberwithin{equation}{section}
\newtheorem{theorem}{Theorem}
\newtheorem{lemma}[theorem]{Lemma}
\newtheorem{proposition}[theorem]{Proposition}
\theoremstyle{definition}
\newtheorem{definition}[theorem]{Definition}
\theoremstyle{remark}
\numberwithin{theorem}{section}
\newcommand{\R}{\mathbb{R}}
\newcommand{\e}{\mathrm{e}}
\newcommand{\Id}{\mathrm{Id}}
\newcommand{\indicator}{\mathbf{1}}
\newcommand{\supp}{\mathrm{supp}}
\newcommand{\arcosh}{\mathrm{arcosh}}
\newcommand{\BMO}{\mathrm{BMO}}
\newcommand{\SR}{\mathscr{R}}
\newcommand{\dV}{\mathrm{d}V}
\newcommand{\bc}{\mathbf{c}}
\newcommand{\F}{\mathscr{F}}
\newcommand{\g}{\mathbf{g}}
\renewcommand{\Re}{\mathrm{Re}}
\renewcommand{\Im}{\mathrm{Im}}
\renewcommand{\SS}{\mathbb{S}}
\renewcommand{\i}{\mathrm{i}}
\renewcommand{\d}{\mathrm{d}}
\begin{document}
\begin{abstract}
Let $G$ be the group $\R_+\ltimes \R^n$ endowed with Riemannian symmetric space metric $d$ and the right Haar measure $\d \rho$ which is of $ax+b$ type, and $L$ be the positive definite distinguished left-invariant Laplacian on $G$. Let $u=u(t,\cdot)$ be the solution of $u_{tt}+Lu=0$ with initial conditions $u|_{t=0}=f$ and $u_t|_{t=0}=g$. In this article we show that for a fixed $t \in{\mathbb R}$ and every $1<p<\infty$, 
\begin{align*}
\|u(t,\cdot)\|_{L^p(G)}\leq C_p\,\Big( (1+|t|)^{2|1/p-1/2|}\|f\|_{L^p_{\alpha_0}\hspace{-0.03cm}(G)}+(1+|t|)\,\|g\|_{L^p_{\alpha_1}\hspace{-0.05cm}(G)}\Big) 
\end{align*}
if and only if
\begin{align*}
\alpha_0\geq n\left|{1\over p}- {1\over2}\right| \quad \mbox{and}
\quad \alpha_1\geq n\left|{1\over p}- {1\over2}\right| -1.
\end{align*} 
This gives an endpoint result for $\alpha_0=n|1/p-1/2|$ and $\alpha_1=n|1/p-1/2|-1$ with $1<p<\infty$ in Corollary 8.2, as pointed out in Remark 8.1 due to M\"{u}ller and Thiele [Studia Math. \textbf{179} (2007)].
\end{abstract}

\maketitle


\section{Introduction}\label{sec1}
Let $ G$ be the Lie group $\R_+\ltimes \R^n$ endowed with the product: for all $(x,y), (x',y')\in G$,
\begin{align*}
(x,y)\, (x',y')=(xx',y+xy').
\end{align*}
The group $G$ is called an $ax+b$ group. Clearly, $e=(1, 0)$ is the identity of $G$. We endow $G$ with the left-invariant Riemannian metric $\g=x^{-2}(\d x^2 +\d y^2)$, and denote by $d$ the corresponding metric. This coincides with the metric on the $(n+1)$-dimensional hyperbolic space. The right Haar measure on $G$ is given by
\begin{align*}
\d \rho(x,y)=x^{-1}\,\d x\d y.
\end{align*}
The space $(G, d, \d \rho)$ is a solvable Lie group with exponential volume growth. Throughout the article, we will use the right Haar measure $\d \rho$ in notions such as $L^p(G)$ for all $1\leq p< \infty$.

A basis for the Lie algebra of left-invariant vector fields is given by
\begin{align}\label{e1.2}
X=x{\partial\over \partial x},\quad Y_j=x{\partial\over \partial y_j},\quad 1\leq j\leq n.
\end{align}
We define the distinguished left-invariant Laplacian to be the second order differential operator
\begin{align}\label{e1.3}
L=-X^2-\sum_{j=1}^nY_j^2.
\end{align}
The distinguished Laplacian $L$ has a self-adjoint extension in $L^2(G)$ (\cite{NeSt}), and thus we can use spectral calculus to define the operators
\begin{align*}
e^{it\sqrt{L}}m(\sqrt{L}),
\end{align*}
where the multiplier $m$ will lie in a suitable symbol class. 

Let $u=u(t,(x,y))$ be the solution of the Cauchy problem
\begin{align}\label{e1.4}
{\partial^2\over\partial t^2}u-\left(X^2+\sum_{j=1}^n Y_j^2\right)\!u=0,\quad u|_{t=0}=f,\quad \left.{\partial\over\partial t}u\right|_{t=0}=g.
\end{align}
For $f,g$ in $L^2(G)$ the solution $u$ is given by
\begin{align*}
u=\cos (t\sqrt{L})(f)+{\sin(t\sqrt{L})\over \sqrt{L}}(g).
\end{align*} 
In \cite[Corollary 8.2]{MuTh}, M\"{u}ller and Thiele studied Sobolev estimates for solutions of the wave equation \eqref{e1.4} and showed that for $t\in\R$ and $1\leq p<\infty$,
\begin{align}\label{e1.6}
\|u(t,\cdot)\|_{L^p(G)}\leq C_p\left((1+|t|)^{2|1/p-1/2|}\|f\|_{L^p_{\alpha_0}\hspace{-0.03cm}(G)}+(1+|t|)\,\|g\|_{L^p_{\alpha_1}\hspace{-0.05cm}(G)}\right) 
\end{align}
provided that 
\begin{align*}
\alpha_0>n\left|{1\over p}-{1\over 2}\right|
\quad\mbox{and}\quad \alpha_1>n\left|{1\over p}-{1\over 2}\right|-1.
\end{align*}
Here the adapted Sobolev norm 
is given by 
\begin{align*}
\|\varphi\|_{L^p_\alpha(G)}:=\|(\Id+L)^{\alpha/2}\varphi\|_{L^p(G)}.
\end{align*} 
Later on, M\"{u}ller and Vallarino proved the result \eqref{e1.6} for solutions of the wave equation on Damek-Ricci spaces, see \cite[Theorem 6.1]{MuVa}.
\smallskip

In this article, we investigate the endpoint estimate of \eqref{e1.6} and show that the estimate \eqref{e1.6} holds for $\alpha_0=n|{1/p}-{1/2}|$ and $\alpha_1=n|{1/p}-{1/2}|-1$ with $1<p<\infty$, as pointed out in \cite[Remark 8.1]{MuTh}. This is the counterpart to the corresponding results by Peral \cite{Pe} and Miyachi \cite{Mi} in the Euclidean setting. Our result is the following.
\begin{theorem}\label{thm1.1} Let $G$ be the group $\R_+\ltimes \R^n$ which is of $ax+b$ type, and $L$ be the distinguished left invariant Laplacian on $G$. Assume $t\in\R$. Then
\begin{asparaenum}[\rm (i)]
\item For $1<p<\infty$, the solution $u=u(t,(x,y))$ of the wave equation \eqref{e1.4} satisfies 
\begin{align*} 
\|u(t,\cdot)\|_{L^p(G)}\leq C_p\,\Big((1+|t|)^{2|1/p-1/2|}\|f\|_{L^p_{\alpha_0}\hspace{-0.03cm}(G)}+(1+|t|)\,\|g\|_{L^p_{\alpha_1}\hspace{-0.05cm}(G)}\Big) 
\end{align*} 
if and only if 
\begin{align*}
\alpha_0\geq n\left|{1\over p}-{1\over 2}\right|
\quad \mbox{and}\quad \alpha_1\geq n\left|{1\over p}-{1\over 2}\right|-1.
\end{align*}

\smallskip
\item
For $p=1$, we have	
\begin{align*} 
\|u(t,\cdot)\|_{L^1(G)}\leq C\,(1+|t|) \big(\|(\Id+L)^{\alpha_0/2}f\|_{H^1(G)}+\|(\Id+L)^{{\alpha_1/2} }g\|_{H^1(G)}\big).
\end{align*}
if and only if 
\begin{align*}
\alpha_0\geq {n\over 2}\quad \mbox{and}\quad \alpha_1\geq {n\over 2}-1.
\end{align*}
Here $H^1(G)$ is the Hardy space on $G$, see Subsection \ref{subsec2.2} below.
\end{asparaenum}
\end{theorem}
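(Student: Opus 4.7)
Set $T_t:=\cos(t\sqrt{L})(\Id+L)^{-\alpha_0/2}$ and $S_t:=\frac{\sin(t\sqrt{L})}{\sqrt{L}}(\Id+L)^{-\alpha_1/2}$. Spectral calculus immediately gives $\|T_t\|_{L^2\to L^2}\le 1$ and $\|S_t\|_{L^2\to L^2}\le C(1+|t|)$ with no Sobolev gain. Granting part~(ii), namely $\|T_t\|_{H^1(G)\to L^1(G)}\le C(1+|t|)$ with $\alpha_0=n/2$ and $\|S_t\|_{H^1(G)\to L^1(G)}\le C(1+|t|)$ with $\alpha_1=n/2-1$, I would apply Calder\'on complex interpolation to the admissible analytic family
\[
\mathcal{T}_z := e^{z^2-n^2/4}\cos(t\sqrt{L})(\Id+L)^{-z/2},\qquad 0\le\Re z\le n/2,
\]
to deduce the $L^p\to L^p$ bound for $1<p<2$ at $\alpha_0=n|1/p-1/2|$. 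The factor $(1+|t|)^{2|1/p-1/2|}$ arises naturally by interpolating the operator norms $1$ (at $L^2$) and $1+|t|$ (at $H^1\to L^1$) with parameter $\theta=2|1/p-1/2|$. The range $2<p<\infty$ then follows by duality and the dual $L^\infty(G)\leftarrow\BMO(G)$ endpoint, together with Fefferman--Stein interpolation. This reduces (i) to (ii).

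\textbf{The endpoint $H^1\to L^1$ estimate.} For (ii) I would use the atomic decomposition of $H^1(G)$ developed by Vallarino for solvable groups of exponential volume growth, whose admissible atoms are supported on balls of bounded, admissible size adapted to the non-doubling geometry of $G$. It is then enough to prove $\|T_t a\|_{L^1(G)}\le C(1+|t|)$ uniformly over admissible atoms $a$, and similarly for $S_t$. I would split the convolution kernel of $T_t$ into a local piece supported where $d(e,\cdot)\le 2(1+|t|)$ and a global piece. Finite propagation speed of the wave equation localizes $T_t a$ in a hyperbolic ball of radius proportional to $|t|$ plus the radius of $a$, and on this local scale the wave kernel on $G$, after extracting the Weyl denominator $(\sinh r)^{-n}$, has the structure of a classical Fourier integral operator of order $-(n-1)/2$ on the tangent space $\R^{n+1}$; Peral--Miyachi-type atomic estimates then furnish the required uniform $L^1$ bound. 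The global piece is controlled via the explicit Plancherel formula for $L$ of Hebisch--Steger together with the exponential decay of the wave kernel outside the propagation region.

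\textbf{Principal obstacle.} The main technical difficulty is the FIO analysis of the local piece uniformly in $t$, with no logarithmic loss. After a dyadic decomposition $T_t=\sum_{k\ge 0} T_{t,k}$ in the $\sqrt{L}$-spectrum, the $k$-th piece carries frequency $2^k$ and the endpoint Sobolev weight $2^{-kn/2}$ must just absorb the $L^1(G)$-norm of the $k$-th wave kernel acting on atoms of scale $2^{-k}$. Summing over $k$ with a gain of only $(1+|t|)$ --- rather than $(1+|t|)\log(2+|t|)$ --- forces one to exploit the oscillation of $e^{\i t\sqrt{L}}$ through stationary phase on the group, combined with a careful treatment of the Weyl denominator specific to the hyperbolic geometry of $G$. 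For $S_t$ the analysis is identical, using that $1/\sqrt{L}$ is a bounded multiplier at high frequency and contributes only a harmless correction at low frequency.

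\textbf{Necessity.} For the reverse implication I would transfer Peral--Miyachi sharpness from the Euclidean case. Take a bump $\chi$ supported in a small neighborhood of $e\in G$ on which the exponential map is nearly an isometry, and let $f_\lambda(x,y):=\chi(x,y)\,e^{\i\lambda y_1}$ for $\lambda\gg 1$. A parametrix for $(\Id+L)^{\alpha_0/2}$ gives $\|(\Id+L)^{\alpha_0/2}f_\lambda\|_{L^p(G)}\sim\lambda^{\alpha_0}$. Finite propagation speed and the closeness of the hyperbolic structure to the Euclidean one on small balls yield $\|\cos(t\sqrt{L})f_\lambda\|_{L^p(G)}\gtrsim\|\cos(t\sqrt{-\Delta})\tilde f_\lambda\|_{L^p(\R^{n+1})}$ modulo lower-order terms, and Peral's Knapp extremizer forces the right-hand side to be $\gtrsim\lambda^{n|1/p-1/2|}$ for a fixed small $t>0$. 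Comparing yields $\alpha_0\ge n|1/p-1/2|$, and an analogous test function $g_\lambda$ forces $\alpha_1\ge n|1/p-1/2|-1$.
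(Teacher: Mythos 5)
Your outer strategy (reduce the $L^p$ claim to an $H^1\to L^1$ endpoint via complex interpolation, then dualize) matches the paper's: the paper proves Proposition~\ref{prop5.2} and interpolates via Lemma~\ref{lem2.7}. But your endpoint proof has a genuine gap, which is precisely the obstacle you flag and do not resolve. You propose to treat the local piece as a Euclidean FIO of order $-(n-1)/2$ and invoke Peral--Miyachi atomic estimates, and to close the global piece by ``exponential decay outside the propagation region.'' On a group of exponential volume growth this does \emph{not} give the uniform $(1+|t|)$ bound: a naive dyadic frequency decomposition plus FIO $L^1$ estimates produces $(1+|t|)\log(2+|t|)$ because nothing in your scheme cancels the exponential growth of the spherical shell $\{d(\cdot,e)\sim|t|\}$. ``Stationary phase on the group, combined with a careful treatment of the Weyl denominator'' names the difficulty but supplies no mechanism.

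The paper's resolution, which is absent from your sketch, is a \emph{factorization of the wave propagator through the spherical averaging operator}. Using the explicit expansions of $\varphi_\lambda(t)$ and $\varphi_\lambda'(t)$ in Proposition~\ref{prop4.1}, one writes on high frequencies
\begin{align*}
\cos(t\lambda)=c_1(\sinh t)^{n/2}\lambda^{n/2}\varphi_\lambda(t)+c_2(\sinh t)^{n/2}\lambda^{n/2-1}\varphi_\lambda'(t)+\mathfrak{r}_t(\lambda),
\end{align*}
with $\mathfrak{r}_t$ an honest FIO symbol that M\"uller--Thiele's Theorem~8.1 already controls. The crucial point is that $\varphi_{\sqrt{L}}(t)$ is convolution with the normalized sphere measure twisted by $\delta^{-1/2}$, and by Lemma~\ref{lem2.1} its $L^1\to L^1$ norm carries the exponential factor $t\,\e^{-nt/2}$, which \emph{exactly} cancels the $(\sinh t)^{n/2}\sim\e^{nt/2}$ prefactor forced by the expansion of $\varphi_\lambda(t)$. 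Composing with an $S^0$ multiplier (bounded $H^1\to L^1$ by Lemma~\ref{lem2.6}) and, for the $\varphi_\lambda'(t)$ term, with a gradient estimate (Lemma~\ref{thm3.2}), yields the clean $O(1+|t|)$ bound. Without this factorization your local/global splitting cannot avoid the logarithmic loss. Your necessity argument is also different from the paper's (the paper tests against $\delta^{1/2}\indicator_{E_t^{(\varepsilon)}}$ on a thin spherical shell of radius $\approx t$ and uses the explicit kernel asymptotics \eqref{e6.2}--\eqref{e6.3}, while you transfer a Knapp example from $\R^{n+1}$); your version may be salvageable but would need care since the hyperbolic parametrix near the identity is not literally the Euclidean one and the kernel's singular support lies on the sphere of radius $t$, not near the origin.
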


Note that the exponent $2|1/p-1/2|$ of $1+|t|$ in {\rm(i)} of Theorem~\ref{thm1.1} is independent of $n$, in contrast to the corresponding estimate by Peral \cite{Pe} and Miyachi \cite{Mi} for the Euclidean case, where it agrees with $n|1/p-1/2|$.
\smallskip

To prove the endpoint estimate for $\alpha_0=n|{1/p}-{1/2}|$ and $\alpha_1=n|{1/p}-{1/2}|-1$ of \eqref{e1.6} in Theorem~\ref{thm1.1}, we use an approach inspired by \cite{Pe} and \cite{Ta2}. We will factorize the wave propagator $(\Id+L)^{-n/4}\e^{\i t\sqrt{L}}$ into an operator given by convolution with some weighted measure on the sphere of radius $|t|$, and a multiplier of order $0$. Although the group $G$ is of exponential growth, we can still get polynomial growths with respect to $|t|$ in the estimates in Theorem \ref{thm1.1}. This phenomenon is due to the fact that the $L^1$--$L^1$-norm of the operator given by convolution with the weighted measure on the sphere of radius $|t|$ mentioned above brings an additional exponential decay on $|t|$, which can \emph{perfectly} eliminate the exponential growth. The proof relies heavily on the explicit asymptotic expansions of the spherical functions $\varphi_\lambda(t)$ on $G$, the spherical Fourier transform of the normalized measure on the sphere of radius $|t|$ as well as its derivative $\varphi_\lambda'(t)$ on $t$. To show the sharpness of the Sobolev indices in Theorem \ref{thm1.1}, we use the fact that for some $\sigma\in\R$ and some $m$ belonging to the symbol class of order $\sigma$, the kernel of the wave propagator $m(\sqrt{L})\,\e^{\i t\sqrt{L}}$ concentrates near the sphere of radius $|t|$. Notice that $m(\sqrt{L})\, \e^{\i t\sqrt{L}}(f)$ concentrates near the identity whenever $f$ is a function supported near the sphere of radius $|t|$. We then construct a family of such functions to obtain the sharpness of the Sobolev indices in Theorem \ref{thm1.1}.
\smallskip

The paper is organized as follows. In Section \ref{sec2} we present basic facts about the Fourier theory as well as the Hardy space $H^1(G)$ on $G$. In Section \ref{sec4} we derive explicit asymptotic expansions of the spherical function $\varphi_\lambda(t)$ and its derivative $\varphi_\lambda'(t)$ on $t$, which will be used in the proof of Theorem \ref{thm1.1}. In Section \ref{sec5} we will prove the sufficiency part of Theorem \ref{thm1.1}. In Section \ref{sec6} we will obtain the necessity part of Theorem \ref{thm1.1}.
\medskip


\section{Preliminaries}\label{sec2}
In this section we review the analysis on the $ax+b$ group $G=\R_+\ltimes \R^n$. For the detail, we refer the reader to \cite{Br, Da, Gr, Pe2, Va}.
\smallskip

\subsection{The hyperbolic space}
The space $G$ endowed with the left-invariant Riemannian metric given by
\begin{align}\label{riem}
\g=x^{-2}(\d x^2+\d y^2)
\end{align}
is the $(n+1)$-dimensional hyperbolic space. The gradient induced by the Riemannian metric $\g$ reads
\begin{align}\label{e2.1}
\nabla f(x,y)=x^2\left({\partial f\over \partial x}(x,y)\,{\partial\over \partial x}+ \sum_{j=1}^n{\partial f\over \partial y_j}(x,y)\,{\partial \over \partial y_j}\right).
\end{align}
The geodesic distance $d$ is a left-invariant distance on $G$. For all $(x,y)$ in $G$
\begin{align}\label{e2.2}
d((x,y),e)=\arcosh {x^2+1+|y|^2\over 2x},
\end{align}
where $e$ is the identity on $G$ and $|\cdot|$ is the $n$-dimensional Euclidean norm. For simplicity in the sequel we denote $R(x,y)=d((x,y),e)$ for $(x,y)\in G$. The measure $\dV$ induced by the Riemannian metric $\g$ is a left Haar measure on $G$ and is given by
\begin{align*}
\dV(x,y)=x^{-(n+1)}\,\d x\d y_1\dots\d y_n.
\end{align*}
In particular,
\begin{align}\label{e2.4}
\dV (x,y)=\delta(x,y)\,\d \rho(x,y),
\end{align}
where the modular function is given by 
\begin{eqnarray}\label{mm}
\delta(x,y)=x^{-n}.
\end{eqnarray}

Following \cite[p. 21 and p. 136]{Pe2} and \cite[Exercise 3.18]{Gr}, we can adopt polar coordinates 
\begin{align*}
(x,y)=(x(r,\omega),y(r,\omega)), \quad (r,\omega)\in [0,\infty)\times \SS^n,
\end{align*}
where $r=R(x,y)\geq 0$, $\omega=(\omega',\omega_{n+1})\in \SS^n\subset \R^{1+n}$,
\begin{align}\label{pol}
x(r,\omega)={1\over(\omega_{n+1}+\coth r)\, \sinh r}\quad \mbox{and} \quad y(r,\omega)={\omega'\over \omega_{n+1}+\coth r},
\end{align}
Under these coordinates, the right Haar measure becomes
\begin{align}\label{e2.5}
\d \rho(x,y)=\big((\omega_{n+1}+\coth r)\, \sinh r\big)^{-n}(\sinh r)^n\,\d r\d\omega.
\end{align}
We now introduce a simple lemma that will be repeatedly used throughout this article.
\begin{lemma}\label{lem2.1}
For $r>0$, we have
\begin{align*}
\int_{\SS^n}\big((\omega_{n+1}+\coth r)\, \sinh r \big)^{-n/2}\,\d\omega&+\int_{\SS^n}\Big|\partial_r\!\Big[\big((\omega_{n+1}+\coth r)\, \sinh r \big)^{-n/2}\Big]\Big|\,\d\omega\\[2pt]
& \leq C\times \begin{cases}
1&\mbox{if }0<r\leq 1,\\
r\,\e^{-nr/2}&\mbox{if }r>1.
\end{cases}
\end{align*}
\end{lemma}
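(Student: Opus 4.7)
The key algebraic observation is that $\coth r\cdot\sinh r=\cosh r$, so
\[
(\omega_{n+1}+\coth r)\sinh r=\cosh r+\omega_{n+1}\sinh r=u\sinh r+\e^{-r},
\]
where $u:=1+\omega_{n+1}\in[0,2]$. Parametrizing $\SS^n$ by $\omega_{n+1}\in[-1,1]$ and $\omega''\in\SS^{n-1}$ gives $\d\omega=c_n(u(2-u))^{(n-2)/2}\d u\,\d\omega''$, reducing $I_1:=\int_{\SS^n}((\omega_{n+1}+\coth r)\sinh r)^{-n/2}\d\omega$ to a one-dimensional integral. For the derivative term, a quick computation gives $\omega_{n+1}\cosh r+\sinh r=u\cosh r-\e^{-r}$, so
\[
\Bigl|\partial_r\bigl[(u\sinh r+\e^{-r})^{-n/2}\bigr]\Bigr|=\tfrac{n}{2}(u\sinh r+\e^{-r})^{-n/2-1}\,|u\cosh r-\e^{-r}|,
\]
and the corresponding integral $I_2$ is also one-dimensional in $u$.

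For $0<r\leq 1$ the argument is routine: $u\sinh r+\e^{-r}$ is bounded above and below by positive constants and $|u\cosh r-\e^{-r}|\leq C$, so both integrands are $O(1)$ and integrability of $(u(2-u))^{(n-2)/2}$ on $[0,2]$ yields $I_1+I_2\leq C$.

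For $r>1$, the plan is to split the $u$-integral at the critical scale $u\sim\e^{-2r}$ at which the two summands in $u\sinh r+\e^{-r}$ balance (using $\sinh r\asymp\cosh r\asymp\e^r$). On $[0,\e^{-2r}]$ the sum is $\asymp\e^{-r}$ and $|u\cosh r-\e^{-r}|\lesssim\e^{-r}$, so the integrand is pointwise $\lesssim\e^{nr/2}u^{(n-2)/2}$ (the derivative picks up an extra $\e^r$ from $(\cdots)^{-1}$ which is compensated by the $\e^{-r}$ gain in $|u\cosh r-\e^{-r}|$); integrating against $u^{(n-2)/2}\d u$ produces $\e^{-nr/2}$. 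On $[\e^{-2r},1]$ the term $u\sinh r$ dominates $\e^{-r}$, and a direct computation collapses the integrand in both $I_1$ and $I_2$ to $\asymp u^{-1}\e^{-nr/2}$; integrating $\d u/u$ over an interval of logarithmic length $\asymp 2r$ yields exactly the bound $Cr\,\e^{-nr/2}$. On $[1,2]$ the integrand is $\lesssim\e^{-nr/2}$ and the remaining Jacobian factor $(u(2-u))^{(n-2)/2}$ is integrable.

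The logarithmic gain from the middle regime is precisely what forces the polynomial prefactor $r$ in the stated bound; the main technical nuisance, rather than a real obstacle, is the case $n=1$, where one must check that the integrable singularities $u^{-1/2}$ and $(2-u)^{-1/2}$ are absorbed in each of the three regimes (in particular, the contribution of $[0,\e^{-2r}]$ is $\e^{nr/2}\cdot\int_0^{\e^{-2r}}u^{-1/2}\d u=2\e^{-r/2}$, still within the target). Modulo this bookkeeping the estimate is a three-piece elementary computation.
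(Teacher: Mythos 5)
Your proof is correct and follows essentially the same route as the paper's: reduce the spherical integral to a one-dimensional integral in the polar angle, note that for $r>1$ the factor $(\sinh r)^{-n/2}$ contributes $\e^{-nr/2}$, and then establish the key estimate $\int_{-1}^1(\omega_{n+1}+\coth r)^{-\delta}(1-\omega_{n+1}^2)^{(n-2)/2}\,\d\omega_{n+1}\lesssim r$ when $\delta=n/2$ (and $\lesssim 1+\e^{(2\delta-n)r}$ otherwise) by splitting at the critical scale where the two competing quantities balance. The paper states this one-dimensional estimate ``by a simple calculation'' without spelling it out, and handles the derivative via the algebraic decomposition $\omega_{n+1}+\tanh r=(\omega_{n+1}+\coth r)-(\coth r-\tanh r)$ together with $\coth r-\tanh r\lesssim\e^{-2r}$, whereas you substitute $u=1+\omega_{n+1}$ and use the cleaner identities $(\omega_{n+1}+\coth r)\sinh r=u\sinh r+\e^{-r}$, $\omega_{n+1}\cosh r+\sinh r=u\cosh r-\e^{-r}$, which make the critical scale $u\sim\e^{-2r}$ and the logarithmic middle regime transparent. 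Your version is thus a more explicit and somewhat more streamlined writeup of the same argument; the three-region split $[0,\e^{-2r}]\cup[\e^{-2r},1]\cup[1,2]$ and the special attention to $n=1$ are exactly the bookkeeping that the paper elides.
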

\begin{proof}
Let us first consider the case $0<r\leq 1$. Note that
\begin{align*}
\big((\omega_{n+1}+\coth t)\,\sinh r\big)^{-n/2}=(\omega_{n+1}\sinh r+\cosh r)^{-n/2}\leq C
\end{align*}
for $0<r\leq 1$ and $\omega\in\SS^n$, so
\begin{align*}
\int_{\SS^n}\big((\omega_{n+1}+\coth r)\, \sinh r \big)^{-n/2}\,\d\omega\leq C.
\end{align*}
Also, since
\begin{align}\label{e2.8}
\partial_r\!\Big[(\omega_{n+1}\sinh r+\cosh r)^{-n/2}\Big]=\big((\omega_{n+1}+\coth r)\, \sinh r\big)^{-n/2-1}(\omega_{n+1}+\tanh r)\,\cosh r,
\end{align}
for $0<r\leq 1$ and $\omega\in \SS^n$, the right hand side is bounded, and
\begin{align*}
\int_{\SS^n}\Big|\partial_r\!\Big[\big((\omega_{n+1}+\coth r)\, \sinh r \big)^{-n/2}\Big]\Big|\,\d\omega\leq C.
\end{align*}
These estimates finish the proof of the case $0<r\leq 1$.

Now we suppose $r\geq 1$. Recalling $\delta(u,v)=u^{-n}$, by \cite[\S D.2]{Gr2} we can write
\begin{align*}
&\hspace{-1cm}\int_{\SS^n}\big((\omega_{n+1}+\coth r)\,\sinh r\big)^{-n/2}\,\d\omega\\
\leq & C\,\e^{-nr/2}\int_{-1}^1(\omega_{n+1}+\coth r)^{-n/2}(1-\omega_{n+1}^2)^{(n-2)/2}\,\d\omega_{n+1}
\end{align*}
and by \eqref{e2.8},
\begin{align*}
&\hspace{-0.5cm}\int_{\SS^n}\Big|\partial_r\!\Big[\big((\omega_{n+1}+\coth r)\, \sinh r \big)^{-n/2}\Big]\Big|\,\d\omega\\
=&{n\over 2}\int_{\SS^n}\big((\omega_{n+1}+\coth r)\, \sinh r\big)^{-n/2-1}|\omega_{n+1}+\tanh r|\,\cosh r\,\d\omega\\
\leq &C\,\e^{-nr/2}\int_{\SS^n}(\omega_{n+1}+\coth r)^{-n/2-1}\big((\omega_{n+1}+\coth r)+|\tanh r-\coth r|\big)\,\d \omega\\
\leq &C\,\e^{-nr/2}\left(\int_{-1}^1(\omega_{n+1}+\coth r)^{-n/2}(1-\omega_{n+1}^2)^{(n-2)/2}\,\d\omega_{n+1}\right.\\
&\quad +\left.(\coth r-\tanh r)\int_{-1}^1(\omega_{n+1}+\coth r)^{-(n/2+1)}(1-\omega_{n+1}^2)^{(n-2)/2}\,\d\omega_{n+1}\right).
\end{align*}
By a simple calculation, for $r>1$,
\begin{align*}
\coth r-\tanh r\leq C\, \e^{-2r} 
\end{align*}
and 
\begin{align*}
\int_{-1}^1(\omega_{n+1}+\coth r)^{-\delta}(1-\omega_{n+1}^2)^{(n-2)/2}\,\d\omega_{n+1}\leq C\times \begin{cases}
r&\mbox{if }\delta=n/2,\\[3pt]
1+\e^{(2\delta-n)r}&\mbox{if }\delta\ne n/2.
\end{cases}
\end{align*}
These estimates yield the desired result for $r>1$.
\end{proof}

Recall that a function $f(x,y)$ is said to be radial on $G$, if it depends only on $R(x,y)$. For such a function $f$, we have the following integration formula, which is a consequence of \eqref{mm}--\eqref{e2.5} and Lemma \ref{lem2.1}.
\begin{lemma}\label{lem2.2}
For every radial function $f$,
\begin{align*}
\int_G \delta^{1/2}(x,y)\,f(x,y)\,\d\rho(x,y)=\int_0^\infty f(r)\,A(r)\,\d r,
\end{align*}
where
\begin{align*}
A(r)\leq C\times\begin{cases}
r^n&\mbox{if }0<r\leq 1,\\
r\,\e^{nr/2}&\mbox{if }r>1.
\end{cases}
\end{align*}
\end{lemma}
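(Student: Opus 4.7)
The plan is straightforward: substitute the polar-coordinate expressions already recorded in \eqref{pol}--\eqref{e2.5} into the integral, then invoke Lemma~\ref{lem2.1} to control the angular part.

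First, I would combine the modular function $\delta(x,y)=x^{-n}$ from \eqref{mm} with the polar coordinate formula $x(r,\omega)=\big((\omega_{n+1}+\coth r)\sinh r\big)^{-1}$ to obtain
\begin{align*}
\delta^{1/2}(x,y)=\big((\omega_{n+1}+\coth r)\sinh r\big)^{n/2}.
\end{align*}
Multiplying against the polar form \eqref{e2.5} of $\d\rho$ then gives
\begin{align*}
\delta^{1/2}(x,y)\,\d\rho(x,y)=\big((\omega_{n+1}+\coth r)\sinh r\big)^{-n/2}(\sinh r)^n\,\d r\,\d\omega.
\end{align*}

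Since $f$ is radial, it depends only on $r=R(x,y)$, so Fubini separates the $r$- and $\omega$-integrals and yields the stated identity with
\begin{align*}
A(r)=(\sinh r)^n\int_{\SS^n}\big((\omega_{n+1}+\coth r)\sinh r\big)^{-n/2}\,\d\omega.
\end{align*}
The final step is to apply the first estimate of Lemma~\ref{lem2.1}: for $0<r\leq 1$ the spherical integral is bounded by $C$ and $(\sinh r)^n\leq Cr^n$, giving $A(r)\leq Cr^n$; for $r>1$ the spherical integral is bounded by $C r\,\e^{-nr/2}$ and $(\sinh r)^n\leq C\e^{nr}$, giving $A(r)\leq Cr\,\e^{nr/2}$.

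There is no genuine obstacle here, as all needed ingredients—the polar decomposition \eqref{e2.5}, the modular function \eqref{mm}, and the angular bound in Lemma~\ref{lem2.1}—are already in place; the lemma is essentially a packaging step that isolates the radial-integration data that will be used repeatedly in later sections.
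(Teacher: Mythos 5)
Your proof is correct and follows exactly the route the paper indicates (the paper simply states the lemma is a consequence of \eqref{mm}--\eqref{e2.5} and Lemma~\ref{lem2.1}, and you have fleshed out precisely those steps). The algebra combining $\delta^{1/2}$ with the polar Jacobian and the two-case application of Lemma~\ref{lem2.1} together with $\sinh r \sim r$ near $0$ and $\sinh r \lesssim \e^r$ for $r>1$ are all as intended.
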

\smallskip

\subsection{Harmonic analysis on $G$}\label{sec3}
One can define the spherical Fourier transform of a radial function $f$ by
\begin{align}\label{e2.9}
\F f(\lambda)=\nu_n\int_0^\infty f(r)\, \varphi_\lambda(r)\,(\sinh r)^n\,\d r,
\end{align}
where $\nu_n$ is the surface area of the unit sphere $\SS^n$ and $\varphi_{\lambda}$ denotes the elementary spherical function given by
\begin{align}\label{e2.10}
\varphi_t(\lambda)={2^{n/2-1}\Gamma((n+1)/2)\over \sqrt{\uppi}\,\Gamma(n/2)} (\sinh t)^{1-n}\int_{-t}^t\e^{\i \lambda s}(\cosh t-\cosh s)^{n/2-1}\,\d s,
\end{align}
see \cite[(3.1.4)]{Br}. We also have the inverse formula of the above Fourier transform:
\begin{align*}
f(r)={2^n\uppi^{(n-1)/2}\over \Gamma ((n+1)/2)}\int_0^\infty \F f(\lambda)\,\varphi_\lambda(r)\,|\bc(\lambda)|^{-2}\,\d\lambda,
\end{align*}
where $\bc$ denotes the Harish-Chandra function given by
\begin{align}\label{e2.11}
\bc(\lambda)={2^{n-1}\Gamma((n+1)/2)\over\sqrt{\uppi}}{\Gamma(\i\lambda)\over \Gamma(n/2+\i\lambda)},
\end{align}
see \cite[(3.1.3)]{Br}.

\smallskip
Now we introduce the spectral multipliers on $G$. The distinguished Laplacian $L$ given by \eqref{e1.3} has a special relationship with the (positive definite) Laplace-Beltrami operator $\Delta $ on $G$. Indeed, in view of \cite[p. 177]{Da}, $\Delta$ has a continuous spectrum $[n^2/4,\infty)$ on $L^2(G,\dV)$. If we let $\Delta_n$ be the shifted operator $\Delta-n^2/4$, it follows from \cite[Proposition 2]{As} that
\begin{align*}
L(f)(x,y)=\delta^{1/2}(x,y)\, \Delta_n(\delta^{-1/2} f)(x,y).
\end{align*}
This combined with the spectral theorem gives
\begin{align}\label{e3.3}
\psi(\sqrt{L})(f)(x,y)=\delta^{1/2}(x,y)\,\psi(\sqrt{\Delta_n})(\delta^{-1/2}f)(x,y).
\end{align}

Let $k_\psi$ and $\kappa_\psi$ denote the convolution kernel of $\psi(\sqrt{L})$ and $\psi(\sqrt{\Delta_n})$ respectively, i.e.,
\begin{align*}
\psi(\sqrt{L})(f)=f*k_\psi\quad\mbox{and}\quad \psi(\sqrt{\Delta_n})(f)=f*\kappa_\psi,
\end{align*}
where ``$*$'' denotes the convolution on $G$ defined by
\begin{align}\label{con}
f*k(g)=\int_G f(h)\,k(h^{-1} g)\,\delta(h)\,\d\rho(h).
\end{align}
\begin{lemma}\label{lem3.1}
Let $\psi$ be a Borel function on $\R_+$. Then $\kappa_\psi$ is radial and is given by
\begin{align*}
\kappa_\psi(x,y)=\F^{-1}(m)(x,y)=\int_0^\infty \psi(\lambda)\,\varphi_\lambda(R(x,y))\,|\bc(\lambda)|^{-2}\,\d \lambda.
\end{align*}
\end{lemma}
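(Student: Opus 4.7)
The plan is to combine the group-theoretic invariance of $\Delta_n$ with the spherical inversion formula recalled just before the statement, and argue in three steps.

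First, I would verify that $\kappa_\psi$ is radial. Since $\Delta_n=\Delta-n^2/4$ is built from the Laplace--Beltrami operator of $(G,\g)$, it commutes with every isometry of the hyperbolic metric, and in particular with the stabiliser $K$ of the identity $e=(1,0)$. By the spectral theorem, $\psi(\sqrt{\Delta_n})$ inherits the same invariance. Coupled with the left-invariance of the operator, a standard argument using the convolution formula \eqref{con} shows that $\kappa_\psi$ must be bi-$K$-invariant, which on the hyperbolic space is the same as being a function of $R(x,y)$ alone.

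Second, to identify the spherical Fourier transform of $\kappa_\psi$, I would test the operator against a radial Schwartz function $f$. Because the elementary spherical function $\varphi_\lambda$ satisfies $\Delta_n\varphi_\lambda=\lambda^2\varphi_\lambda$, the spherical transform \eqref{e2.9} diagonalises $\Delta_n$ on radial functions, so spectral calculus yields
\[
\F\bigl(\psi(\sqrt{\Delta_n})f\bigr)(\lambda)=\psi(\lambda)\,\F f(\lambda).
\]
On the other hand, $\psi(\sqrt{\Delta_n})f=f*\kappa_\psi$, and the spherical transform is multiplicative on radial convolutions, giving $\F(f*\kappa_\psi)(\lambda)=\F f(\lambda)\,\F\kappa_\psi(\lambda)$. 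Comparing the two relations forces $\F\kappa_\psi(\lambda)=\psi(\lambda)$, and substituting this into the inversion formula displayed just before the lemma produces the stated integral representation.

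The main obstacle is the low regularity of $\psi$: for a general Borel function neither $\kappa_\psi$ nor its spherical transform need exist pointwise, so one has to first establish the identity for $\psi$ in a dense subclass (for example, rapidly decaying smooth functions supported away from the origin) where every manipulation above converges absolutely, and then extend by a density argument using the spectral theorem, or alternatively interpret both sides distributionally as operator-valued functionals of $\psi$. Once the identity is in place on the dense class, the general statement follows by a routine limiting argument combined with the continuity of the functional calculus on $L^2(G,\d V)$.
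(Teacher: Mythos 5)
The paper does not prove this lemma itself; it simply cites Bray \cite[Section 4]{Br}. Your proposal reconstructs the standard argument that such a reference would contain, and it is correct in substance: radiality from the isometry-invariance of $\Delta_n$ under the stabiliser of $e$ together with left-invariance; the identity $\F\kappa_\psi=\psi$ from the facts that the spherical transform diagonalises $\Delta_n$ (via $\Delta_n\varphi_\lambda=\lambda^2\varphi_\lambda$) and is an algebra homomorphism on radial convolutions; then the inversion formula; finally a density argument to pass from nice $\psi$ to general bounded Borel $\psi$. One small terminological slip: $\kappa_\psi$ lives on $G=NA$, which is not the full isometry group, so ``bi-$K$-invariant'' is not quite the right phrase --- what you mean is that $\kappa_\psi$, viewed as a function on the hyperbolic space, is $K$-invariant, i.e.\ radial. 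You are also right to flag the regularity issue; for general Borel $\psi$ the kernel $\kappa_\psi$ is only a distribution (or an $L^2$ object via the Plancherel theorem for the spherical transform), and the displayed integral should be read in that sense or established first for, say, $\psi\in C_c^\infty$ and extended. The paper sidesteps all of this by citation, so your version is, if anything, more self-contained.
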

\begin{proof}
See \cite[Section 4]{Br}.
\end{proof}

In view of Lemma \ref{lem3.1} and \eqref{e3.3},
\begin{align}\label{e3.4}
k_\psi(x,y)=\delta^{1/2}(x,y)\,\kappa_\psi(x,y)=\delta^{1/2}(x,y)\int_0^\infty \psi(\lambda)\,\varphi_\lambda(R(x,y))\,|\bc(\lambda)|^{-2}\,\d \lambda.
\end{align}
Following \cite[Proposition 4.1]{MuTh}, the above formula \eqref{e3.4} can be rewritten as the following:
\begin{align}\label{e3.5}
k_\psi(x,y)=\delta^{1/2}(x,y)\int_\R \psi(\lambda) \, F_{R(x,y)}(\lambda)\,\lambda\,\d \lambda
\end{align}
for
\begin{align*}
F_r(\lambda):=C_\ell\int_r^\infty \left({\partial\over\partial s}{1\over\sinh s}\right)^\ell\!\Big[\e^{\i\lambda s}\Big]\,(\cosh s-\cosh r)^{\ell-n/2}\,\d s,
\end{align*}
where $\ell$ is any integer satisfying $\ell>n/2-1$. Moreover, the function $F_r(\lambda)$ and its derivatives on $R$ have the following asymptotic expansions. Recall that the symbol class $S^\sigma$ is given by
\begin{align*}
\Big\{b\in C^\infty(\R):\sup_{\lambda\in \R}\,(1+\lambda^2)^{(-\sigma+k)/2}|b^{(k)}(\lambda)|< \infty\mbox{ for all }k\in\mathbb{N}\Big\}.
\end{align*}
\begin{lemma}\label{lem3.3}
Set $F^{(i)}_r(\lambda):=\partial^i_r(F_r(\lambda))$ for $i=0,1,2$.
\begin{asparaenum}[\rm (a)]
\item For $r>1$ and $i=0,1,2$, then
\begin{align*}
F^{(i)}_r(\lambda)=\e^{\i\lambda r}\e^{-nr/2}b_{n/2-1+i}(\lambda,r),
\end{align*}
where $b_\sigma(\lambda,r)\in S^\sigma_\lambda$ uniformly in $r>1$.
\item For $0<r\leq 1$ and $i=1,2$, we have
\begin{align*}
F^{(0)}_r(\lambda)=\e^{\i\lambda r}\times \begin{cases}
r^{-1/2}b_{-1/2}(\lambda,r)&\mbox{if }n=1,\\
r^{-n/2}b_{n/2-1}(\lambda,r)+r^{1-n}b_{0}(\lambda,r)&\mbox{if }n\geq 2,
\end{cases}
\end{align*}
\begin{align*}
F^{(i)}_r(\lambda)=\e^{\i\lambda r}
\left(r^{-n/2+1}b_{n/2+i}(\lambda,r)+r^{-n+1-i}b_0(\lambda,r)\right).
\end{align*}
where $b_\sigma(\lambda,r)\in S^\sigma_\lambda$ uniformly in $0<r\leq 1$.
\end{asparaenum}
\end{lemma}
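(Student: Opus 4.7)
The plan is to unfold the differential operator explicitly and then analyze the resulting oscillatory integral separately in the two regimes $r > 1$ and $0 < r \leq 1$. I would first show by induction on $\ell$ that
\[
\left(\frac{\partial}{\partial s}\frac{1}{\sinh s}\right)^\ell[\e^{\i\lambda s}] = \e^{\i\lambda s} \sum_{k=0}^\ell (\i\lambda)^k Q_{\ell,k}(s),
\]
where each $Q_{\ell,k}(s)$ is a fixed rational combination of $\sinh s$ and $\cosh s$; the leading term is $Q_{\ell,\ell}(s) = 1/\sinh^\ell s$, and near $s = 0$ each $Q_{\ell,k}$ behaves like $s^{-(2\ell - k)}$ up to bounded smooth factors. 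Then the substitution $s = r + u$ combined with $\cosh s - \cosh r = 2\sinh((s+r)/2)\sinh(u/2)$ isolates the $r$-dependence cleanly.

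\textbf{Part (a).} For $r > 1$, every factor $\sinh(r + u/2)$ equals $\frac{1}{2}\e^{r+u/2}$ times a smooth function of $r+u/2 > 1$ that is uniformly bounded with all derivatives. Collecting the powers of $\e^r$ from $(\cosh s - \cosh r)^{\ell - n/2}$ and from $Q_{\ell,k}(s) \sim \e^{-(2\ell-k)(r+u)}$ produces exactly the exponential factor $\e^{-nr/2}$, while $\e^{\i\lambda s} = \e^{\i\lambda r}\e^{\i\lambda u}$ peels off the phase. One arrives at
\[
F_r(\lambda) = \e^{\i\lambda r}\e^{-nr/2} \sum_{k=0}^\ell (\i\lambda)^k \int_0^\infty \e^{\i\lambda u}\, G_k(r,u)\,\d u,
\]
where $G_k(r,u)$ is smooth in $u > 0$, uniformly $C^\infty$-bounded in $r > 1$, has the mild singularity $u^{\ell - n/2}$ at $u = 0$, and decays exponentially in $u$. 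Iterated integration by parts in $u$ via $\e^{\i\lambda u} = (\i\lambda)^{-1}\partial_u \e^{\i\lambda u}$ then shows each integral lies in $S_\lambda^{-\ell + n/2 - 1}$ uniformly in $r > 1$, so after multiplying by $(\i\lambda)^k$ the sum lies in $S_\lambda^{n/2-1}$. For the derivatives $\partial_r^i F_r$ with $i = 1, 2$, each $\partial_r$ either strikes $\e^{\i\lambda r}$ (yielding $\i\lambda$), $\e^{-nr/2}$ (yielding a constant), or the smooth factor $G_k$; the worst case costs one factor of $\lambda$ per derivative, giving the claimed order $n/2 - 1 + i$.

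\textbf{Part (b).} For $0 < r \leq 1$ the exponential bookkeeping of (a) is unavailable and I would split $\int_r^\infty = \int_r^2 + \int_2^\infty$. The tail is harmless: the argument of part (a) applies and contributes a smooth uniformly bounded symbol. For the inner piece, Taylor expand $\sinh s = s(1+O(s^2))$ and $\cosh s - \cosh r = \frac{1}{2}(s^2 - r^2)(1 + O(s^2))$, so after $s = r + u$ the weight becomes $(\cosh s - \cosh r)^{\ell-n/2} \asymp (u(2r+u)/2)^{\ell - n/2}$. The integrand naturally splits at the scale $u \sim r$: in the inner zone $u \lesssim r$ the rescaling $u = rv$ yields an overall factor $r^{2(\ell - n/2) - \ell + 1} = r^{\ell - n + 1}$, which when combined with the leading coefficient $(\i\lambda)^\ell$ and integration by parts gives a $r^{-n/2}$ term with $\lambda$-symbol of order $n/2 - 1$; in the outer zone $r \lesssim u \lesssim 1$, the lowest-$\lambda$-order term $Q_{\ell,0}(s) \sim s^{-2\ell}$ produces an $r$-singular contribution of order $r^{1-n}$ (via $\int_r^1 s^{-(n-1)}\,\d s$) when $n \geq 2$, with $\lambda$-symbol of order $0$; the two combine to give the stated expansion, and for $n = 1$ the outer contribution is integrable in $s$ near $0$ and so collapses into the single $r^{-1/2}$ term. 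The derivative formulas for $i = 1, 2$ follow by differentiating under the integral, using $\partial_r(\cosh s - \cosh r)^{\ell - n/2} = -(\ell - n/2)\sinh r\,(\cosh s - \cosh r)^{\ell - n/2 - 1}$; in the outer zone each differentiation multiplies by $\sinh r/(\cosh s - \cosh r) \sim 1/u$ and after one integration by parts raises the $\lambda$-order by $1$ while lowering the power of $r$ by $1$, producing $r^{-n+1-i}b_0(\lambda,r)$; in the inner zone the rescaling shows the power of $r$ is \emph{raised} by $1$ per derivative, yielding $r^{-n/2+1}b_{n/2+i}(\lambda,r)$.

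\textbf{Main obstacle.} The delicate step is the two-scale analysis in part (b) and its uniformity in $r \in (0,1]$. Justifying that the $r^{-n/2}$ and $r^{1-n}$ contributions emerge as genuinely separate asymptotic pieces, with error terms from the Taylor expansions of $\sinh$ and $\cosh - \cosh$ absorbed into higher-order corrections, is a careful but mechanical bookkeeping task. Tracking the admissible number of integrations by parts before boundary terms at $s = r$ and $s = 2$ become uncontrollable — in the presence of the singular weight $(\cosh s - \cosh r)^{\ell - n/2}$ — is the most technical aspect.
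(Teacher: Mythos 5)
The paper's own proof is a two-line citation: the $i=0$ case is taken from Propositions 5.2 and 5.7 in M\"uller--Thiele, and the $i=1,2$ cases are said to follow from Lemma 5.1 there together with the argument of Proposition 3.4 in M\"uller--Vallarino. You instead attempt a self-contained proof, which is in spirit exactly the argument those references carry out: expand $(\partial_s(1/\sinh s))^\ell[\e^{\i\lambda s}]=\e^{\i\lambda s}\sum_k(\i\lambda)^kQ_{\ell,k}(s)$, substitute $s=r+u$ and factor $\cosh s-\cosh r=2\sinh(r+u/2)\sinh(u/2)$, then estimate the resulting oscillatory integral in $u$ separately in the exponential regime $r>1$ and the two-scale regime $0<r\le1$. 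Your Part (a) is essentially correct; one should fix $\ell$ large enough (say $\ell>n/2+2$) before taking $\partial_r$ so that the boundary term at $s=r$ vanishes, a point your closing caveat about boundary terms suggests you have in mind but do not make explicit.

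In Part (b), however, the outer-zone bookkeeping contains concrete errors, even though the final exponents happen to come out right. First, $\sinh r/(\cosh s-\cosh r)\sim r/u^2$, not $1/u$, for $r\lesssim u\le1$; by coincidence both yield the same $r$-power after integrating against $u^{-n}$ over $[r,1]$, but the factor itself is wrong, so the justification is not actually given. Second, you assert that each $\partial_r$ "raises the $\lambda$-order by $1$ after one integration by parts" in the outer zone, yet your stated conclusion keeps the outer-zone symbol $b_0$ of order $0$ for every $i$ --- these two statements contradict each other. The correct picture is that in the outer zone there is no $u$-singularity forcing integration by parts, so the $\lambda$-order stays $0$ and only the $r$-power drops by $1$ per derivative. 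Third, the displayed integral $\int_r^1 s^{-(n-1)}\,\d s$ should read $\int_r^1 s^{-n}\,\d s$ (the former would give $r^{2-n}$, not $r^{1-n}$). Finally, for $n=1$ the outer contribution does \emph{not} simply "collapse": without invoking the oscillation it behaves like $\int_r^1u^{-1}\,\d u\sim\log(1/r)$, and turning that into something absorbed by $r^{-1/2}b_{-1/2}(\lambda,r)$ using $\e^{\i\lambda u}$ and $\lambda r\ge1$ requires an argument your sketch omits. None of this invalidates the overall method, but as written the outer-zone analysis does not actually establish the claimed symbol orders and $r$-powers.
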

\begin{proof}
The case $i=0$ is \cite[Propositions 5.2 and 5.7]{MuTh}. To show the case $i=1,2$ one uses \cite[Lemma 5.1]{MuTh} and follows the proof of \cite[Proposition 3.4]{MuVa}.
\end{proof}

We also need the following lemma to estimate oscillatory integrals.
\begin{lemma}\label{lem3.4}
Suppose $\sigma>-1$ and $m\in S^\sigma$. Then on $\R\setminus\{0\}$ the distribution
\begin{align*}
\mathfrak{u}:=\int_\R \e^{\i\lambda(\cdot)}m(\lambda)\,\d \lambda
\end{align*}
coincides with a function. Moreover, for all $N=1,2,\dots$,
\begin{align*}
|\mathfrak{u}(r)|\leq C_N\times \begin{cases}
r^{-\sigma-1}&\mbox{if }0<r\leq 1,\\
r^{-N}&\mbox{if }r>1.
\end{cases}
\end{align*}
\end{lemma}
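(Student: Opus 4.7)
The plan is to establish the lemma by a standard dyadic decomposition of the symbol $m$ combined with integration by parts. Fix $\chi\in C_c^\infty(\R)$ with $\chi=1$ on $[-1,1]$ and $\supp\chi\subset[-2,2]$, set $\chi_0=\chi$ and $\chi_k(\lambda)=\chi(\lambda/2^k)-\chi(\lambda/2^{k-1})$ for $k\geq 1$, so that $\{\chi_k\}$ is a smooth partition of unity with $\supp\chi_k\subset\{2^{k-1}\leq|\lambda|\leq 2^{k+1}\}$ for $k\geq 1$. Writing $m_k=\chi_k m$, the symbol hypothesis $m\in S^\sigma$ together with the Leibniz rule gives $|\partial_\lambda^j m_k(\lambda)|\leq C_j\,2^{k(\sigma-j)}$ for $k\geq 1$ and $j=0,1,2,\dots$. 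Define
\begin{align*}
\mathfrak{u}_k(r)=\int_\R \e^{\i\lambda r}\,m_k(\lambda)\,\d\lambda.
\end{align*}

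For $k\geq 1$ I would derive two competing bounds. The trivial bound
\begin{align*}
|\mathfrak{u}_k(r)|\leq \|m_k\|_\infty\cdot|\supp m_k|\leq C\,2^{k(\sigma+1)}
\end{align*}
uses only the support size, while $N$-fold integration by parts via $\e^{\i\lambda r}=(\i r)^{-N}\partial_\lambda^N\e^{\i\lambda r}$ yields
\begin{align*}
|\mathfrak{u}_k(r)|\leq C_N\,2^{k(\sigma+1-N)}r^{-N},\qquad N=1,2,\dots.
\end{align*}
The piece $\mathfrak{u}_0$ is handled separately: since $m_0\in C_c^\infty(\R)$, its Fourier transform $\mathfrak{u}_0$ is a Schwartz function, in particular bounded and rapidly decreasing.

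For $0<r\leq 1$, I would split the sum at the threshold $2^k\sim r^{-1}$: on low frequencies $2^k\leq r^{-1}$ the trivial bound gives a geometric series $\sum 2^{k(\sigma+1)}\lesssim r^{-\sigma-1}$ which converges because $\sigma+1>0$; on high frequencies $2^k>r^{-1}$ we fix any $N>\sigma+1$ and use the integration by parts bound to get $r^{-N}\sum_{2^k>r^{-1}}2^{k(\sigma+1-N)}\lesssim r^{-\sigma-1}$. Together with the $O(1)$ contribution from $\mathfrak{u}_0$, this yields the bound $r^{-\sigma-1}$ on $(0,1]$. For $r>1$, we simply take $N$ arbitrarily large (in particular $N>\sigma+1$) in the integration by parts bound, so $\sum_{k\geq 1}|\mathfrak{u}_k(r)|\leq C_N r^{-N}\sum_{k\geq 1}2^{k(\sigma+1-N)}\leq C_N r^{-N}$, and the Schwartz bound on $\mathfrak{u}_0$ gives the same $r^{-N}$ decay.

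Finally, the series $\sum_k \mathfrak{u}_k(r)$ converges absolutely and uniformly on every compact subset of $\R\setminus\{0\}$ by the above estimates, defining a continuous function there; pairing against a test function and using dominated convergence identifies this function with the distribution $\mathfrak{u}$, which justifies the phrase ``coincides with a function'' on $\R\setminus\{0\}$. The argument is essentially bookkeeping: the only point demanding a little care is the balancing of the two bounds at the threshold $2^k\sim r^{-1}$, which is what forces the exponent $-\sigma-1$ in the small-$r$ regime and relies crucially on the hypothesis $\sigma>-1$ to make the geometric sum converge.
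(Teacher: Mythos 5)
Your proof is correct, and it is essentially the argument the paper invokes by citing the proof of Theorem~0.5.1 in Sogge's book: a dyadic decomposition of the symbol into annular pieces $m_k$, the two competing bounds (trivial size bound and $N$-fold integration by parts), and balancing them at the threshold $2^k\sim |r|^{-1}$, with the hypothesis $\sigma>-1$ entering exactly to make the low-frequency geometric sum converge. The only cosmetic remark is that the bounds should be read in terms of $|r|$ so as to cover all of $\R\setminus\{0\}$, which your argument handles without change.
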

\begin{proof}
See the proof of \cite[Theorem 0.5.1]{So}.
\end{proof}
\smallskip

\subsection{Hardy spaces on $G$.}\label{subsec2.2} Let us first recall the definition of Calder\'{o}n-Zygmund sets on $G$ which appears in \cite{HeSt} and implicitly in \cite{GiSj}.
\begin{definition}[{\cite[Definition 2.1]{Va}}]\label{def2.3}
A \emph{Calder\'{o}n-Zygmund set} on $G$ is a set $R=[x\,\e^{-s},x\,\e^s]\times Q$, where $Q$ is a dyadic cube in $\R^n$ of side length $\ell$ such that $x,s,\ell>0$ satisfy the following \emph{admissible condition}:
\begin{align*}
\begin{array}{lll}
\e^2x\,s\leq \ell\leq \e^8x\,s & &\mbox{if }0<s\leq 1,\smallskip\\
\e^{2s}x\leq \ell\leq \e^{8s}x & &\mbox{if }s>1.
\end{array}
\end{align*}
Let $\SR$ denote the family of all Calder\'{o}n-Zygmund sets.
\end{definition}
In \cite{HeSt} the authors prove that the space $G$ satisfies the \emph{Calder\'{o}n-Zygmund property}. More precisely, they proved that there exist two uniformly constants $c_0,c_1\geq 1$ such that the following properties hold.
\begin{asparaitem}
\item For every $R\in\SR$, there are an $(x_R,y_R)\in R$ and an $r_R$ such that
\begin{align}\label{e2.12}
B((x_R,y_R),r_R)\subset R\subset B((x_R,y_R),c_0r_R),
\end{align}
where $B((x,y),r)$ is the ball in $G$ centered at $(x,y)$ of radius $r$.
\item For every $R\in\SR$, its dilated set is defined by $R^*:=\{(x,y)\in G:d((x,y),R)<r_R\}$, which satisfies
\begin{align*}
\rho(R^*)\leq c_1\rho(R).
\end{align*}
\item For every $R\in\SR$ there exist mutually disjoint sets $R_1,\dots,R_k\in\SR$, with $k=2$ or $2^n$, such that $R=\bigsqcup_{i=1}^kR_i$ and $\rho(R_i)=\rho(R)/k$ for $i=1,\cdots,k$.
\end{asparaitem}

Now we are in a position to introduce the Hardy space $H^1(G)$ defined in \cite{Va}. By replacing balls with Calder\'{o}n-Zygmund sets in the classical definition of atoms, we say that a function $a$ is an \emph{$(1,\infty)$-atom} if it satisfies the following properties.
\begin{asparaenum}
\item $a$ is supported in a Calder\'{o}n-Zygmund set $R$.
\item $\|a\|_\infty\leq(\rho(R))^{-1}$.
\item $\int a\,\d\rho=0$.
\end{asparaenum}
Observe that an $(1,\infty)$-atom is in $L^1(G)$ and it is normalized in such a way that its $L^1(G)$-norm does not exceed $1$.
\begin{definition}[{\cite[Definition 2.2]{Va}}]\label{def2.4}
The Hardy space $H^1(G)$ is the space of all functions $h$ in $L^1(G)$ such that $h=\sum_j\lambda_ja_j$, where $a_j$ are $(1,\infty)$-atoms and $\lambda_j\in \mathbb{C}$ such that $\sum_j|\lambda_j|<\infty$. We denote by $\|h\|_{H^1(G)}$ the infimum of $\sum_j|\lambda_j|$ over such decompositions.
\end{definition}
We have the following singular integral theorem and multiplier theorem as well as the interpolation theorem.
\begin{lemma}[{\cite[Theorem 4.1]{Va}}]\label{lem2.5}
Let $T$ be a linear operator which is bounded on $L^2(G)$ and admits a locally integrable Schwartz kernel $K$ with respect to $\d\rho$ off the diagonal which satisfies the condition
\begin{align*}
\sup_{R\in\mathscr{R}}\sup_{h,h'\in R}\int_{(R^*)^c}|K(g,h)-K(g,h')|\,\d \rho(g)<\infty.
\end{align*}
Then $T$ extends to a bounded operator from $H^1(G)$ to $L^1(G)$.
\end{lemma}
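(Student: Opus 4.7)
The plan is to reduce to showing $\|Ta\|_{L^1(G)}\leq C$ uniformly over all $(1,\infty)$-atoms $a$ (with $C$ independent of the atom), and then conclude by the atomic decomposition of $H^1(G)$ given in Definition~\ref{def2.4}. Indeed, if $h=\sum_j\lambda_j a_j$ is any admissible decomposition, then $\|Th\|_{L^1(G)}\leq \sum_j|\lambda_j|\,\|Ta_j\|_{L^1(G)}\leq C\sum_j|\lambda_j|$, and taking the infimum over decompositions yields $\|Th\|_{L^1(G)}\leq C\,\|h\|_{H^1(G)}$. A small but standard technical point to address at this stage is that the sum $\sum_j\lambda_j a_j$ converges only in $L^1(G)$, so to pass $T$ through the sum one argues first on a dense subclass of $H^1(G)$ (e.g.\ finite atomic combinations) and then extends by density, using the $L^2$-boundedness of $T$ to make sense of $Ta$ at the atom level.

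Now fix an $(1,\infty)$-atom $a$ supported in a Calder\'on--Zygmund set $R\in\SR$, with $\|a\|_\infty\leq \rho(R)^{-1}$ and $\int a\,\d\rho=0$. I would split
\begin{align*}
\|Ta\|_{L^1(G)}=\int_{R^*}|Ta(g)|\,\d\rho(g)+\int_{(R^*)^c}|Ta(g)|\,\d\rho(g)=:\mathrm{I}+\mathrm{II}.
\end{align*}
For the \emph{local part} $\mathrm{I}$, I would use Cauchy--Schwarz together with the $L^2(G)$-boundedness of $T$ and the local doubling property $\rho(R^*)\leq c_1\rho(R)$:
\begin{align*}
\mathrm{I}\leq \rho(R^*)^{1/2}\|Ta\|_{L^2(G)}\leq C\rho(R)^{1/2}\|a\|_{L^2(G)}\leq C\rho(R)^{1/2}\,\|a\|_\infty\,\rho(R)^{1/2}\leq C.
\end{align*}

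For the \emph{far part} $\mathrm{II}$, I would use the cancellation $\int a\,\d\rho=0$ to introduce a reference point $h_0\in R$ (any choice works, e.g.\ the center appearing in \eqref{e2.12}); this is the key place where the Hörmander-type condition enters. For $g\in(R^*)^c$, the kernel representation gives
\begin{align*}
Ta(g)=\int_R K(g,h)\,a(h)\,\d\rho(h)=\int_R\bigl(K(g,h)-K(g,h_0)\bigr)\,a(h)\,\d\rho(h),
\end{align*}
so by Fubini and the hypothesis of the lemma,
\begin{align*}
\mathrm{II}\leq \int_R|a(h)|\int_{(R^*)^c}|K(g,h)-K(g,h_0)|\,\d\rho(g)\,\d\rho(h)\leq C\,\|a\|_{L^1(G)}\leq C.
\end{align*}
Combining the two bounds gives $\|Ta\|_{L^1(G)}\leq C$ uniformly in $a$, which by the reduction above completes the proof.

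The main obstacle is really just checking that the atomic decomposition machinery goes through cleanly in this non-doubling, exponential-growth setting: one must ensure that $T$ can be applied atom-by-atom, which requires the $L^2$-boundedness together with the fact that atoms lie in $L^2(G)$ (immediate from $\|a\|_\infty\le\rho(R)^{-1}$ and $\supp a\subset R$). Once that is in place, the argument is the classical Calder\'on--Zygmund $H^1\to L^1$ scheme, with Euclidean balls replaced by the Calder\'on--Zygmund sets $R\in\SR$, and the dilate $R^*$ and the bound $\rho(R^*)\leq c_1\rho(R)$ from the structural properties of $\SR$ playing the role of the standard doubling.
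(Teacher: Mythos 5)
The paper does not supply its own proof of this lemma; it is imported verbatim as \cite[Theorem 4.1]{Va}. Your argument is precisely the standard Calder\'on--Zygmund $H^1\to L^1$ scheme transplanted to the Calder\'on--Zygmund sets $\mathscr{R}$: the local part over $R^*$ is controlled by Cauchy--Schwarz, the $L^2$-boundedness of $T$, and the bound $\rho(R^*)\leq c_1\rho(R)$, while the far part over $(R^*)^c$ uses the cancellation of the atom together with the H\"ormander-type integral condition. This is exactly the proof given in \cite{Va}, so your attempt is correct and takes the same route. You also correctly flag the one genuine technical subtlety---passing from a uniform bound on atoms to boundedness on all of $H^1(G)$, which requires making sense of $T$ atom-by-atom via the $L^2$-theory (atoms lie in $L^2$ since $\|a\|_\infty\le\rho(R)^{-1}$ and $a$ has bounded support) and a density/extension step; this point is treated the same way in \cite{Va}.
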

\begin{lemma}[{\cite[Proposition 4.2]{Va}}]\label{lem2.6}
Suppose that $s_0>3/2$ and $s_\infty>\max\{3/2,(n+1)/2\}$. Let $\beta\in C^\infty(\R)$ with $\supp \beta\subset [1/4,4]$ such that
\begin{align*}
\sum_{j=-\infty}^\infty \beta(2^{-j}\lambda)\equiv 1\quad\mbox{for all }\lambda \in \R_+.
\end{align*}
If $m\in L^\infty(\R)$ satisfies
\begin{align*}
\sup_{0<\tau\leq 1}\|m(\tau\cdot)\,\beta(\cdot)\|_{L^2_{s_0}(\R)}+\sup_{\tau>1}\|m(\tau\cdot)\,\beta(\cdot)\|_{L^2_{s_\infty}(\R)}<\infty,
\end{align*}
then the operator $m(\sqrt{L})$ is bounded from $H^1(G)$ to $L^1(G)$, and is bounded on $L^p(G)$ for all $1<p<\infty$.
\end{lemma}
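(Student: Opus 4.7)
The plan is to deduce the result from the Calder\'{o}n--Zygmund machinery on $G$ provided by Lemma \ref{lem2.5}. I will first show that $m(\sqrt{L})$ is bounded on $L^2(G)$ and from $H^1(G)$ to $L^1(G)$; Marcinkiewicz interpolation then yields $L^p$-boundedness for $1<p\leq 2$, and duality (replacing $m$ by $\overline{m}$, which satisfies the same hypotheses) covers $2\leq p<\infty$. The $L^2$-bound is immediate from the spectral theorem, since $m\in L^\infty(\R)$.

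For the $H^1(G)\to L^1(G)$ bound, I perform a dyadic frequency decomposition $m=\sum_{j\in\Z}m_j$ with $m_j(\lambda):=m(\lambda)\,\beta(2^{-j}\lambda)$, and prove pointwise and $L^1$ estimates on the convolution kernels $k_{m_j}$ of $m_j(\sqrt{L})$. Using the representation \eqref{e3.5} together with the asymptotic expansion of $F_r(\lambda)$ in Lemma \ref{lem3.3}, $k_{m_j}$ is a sum of weighted oscillatory integrals of the form
\[
\delta^{1/2}(x,y)\,\e^{-nR(x,y)/2}\int_\R \e^{\i\lambda R(x,y)}\,m_j(\lambda)\,b_\sigma(\lambda,R(x,y))\,\lambda\,\d\lambda
\]
when $R(x,y)>1$, and a similar expression with a power weight in $R(x,y)$ when $R(x,y)\leq 1$. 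Applying Lemma \ref{lem3.4} to the inner oscillatory integral in $\lambda$ gives pointwise bounds on $k_{m_j}$ concentrated at $R(x,y)\sim 2^{-j}$ at high frequency, with rapid decay off this shell.

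The central task is then to verify the singular-integral condition
\[
\sup_{R\in\SR}\,\sup_{h,h'\in R}\int_{(R^*)^c}|K(g,h)-K(g,h')|\,\d\rho(g)<\infty
\]
for the Schwartz kernel $K(g,h)=k_m(h^{-1}g)\,\delta(h)$ of $m(\sqrt{L})$, where $k_m=\sum_j k_{m_j}$. Using left-invariance and the admissibility constraint on $R\in\SR$ (which matches the exponential volume growth of $G$), the verification reduces at each dyadic scale to an integrated cancellation bound on $k_{m_j}(h^{-1}g)-k_{m_j}({h'}^{-1}g)$. Summing the dyadic contributions, the low-frequency sum ($j<0$) is absorbed by the hypothesis $s_0>3/2$, matching the effective dimension of the Plancherel density $|\bc(\lambda)|^{-2}\sim \lambda^2$ near $\lambda=0$, while the high-frequency sum ($j\geq 0$) is absorbed by $s_\infty>\max\{3/2,(n+1)/2\}$, matching $|\bc(\lambda)|^{-2}\sim \lambda^n$ at infinity.

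The main obstacle is the last step: establishing the uniform integral bound across all frequency scales and all Calder\'{o}n--Zygmund sets simultaneously. The exponential volume growth of $G$ prevents any ``twice-the-radius'' argument from the Euclidean setting, forcing the use of the anisotropic admissible geometry of $\SR$ together with the decoupling of the left-invariant kernel from the modular weight $\delta(h)$. Furthermore, the two distinct effective dimensions of the Plancherel measure at $0$ and at $\infty$ are what necessitate the two separate smoothness thresholds $s_0$ and $s_\infty$. Once this estimate is in hand, Lemma \ref{lem2.5} yields the $H^1(G)\to L^1(G)$ bound, and the full $L^p$-range $1<p<\infty$ follows by interpolation with the $L^2$-bound and duality.
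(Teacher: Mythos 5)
The paper does not prove Lemma \ref{lem2.6}: it is quoted directly from Vallarino (\cite[Proposition 4.2]{Va}), so there is no internal proof to compare against. Your outline is consistent in spirit with the cited argument --- reduce to the singular-integral criterion of Lemma \ref{lem2.5}, use the kernel representation \eqref{e3.5}, decompose dyadically, and track the two effective dimensions of $|\bc(\lambda)|^{-2}$ at $0$ and $\infty$ --- but as written it has a genuine gap exactly where you flag ``the main obstacle.''

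Concretely, you propose to control the kernel pieces $k_{m_j}$ by applying Lemma \ref{lem3.4} to $m_j(\lambda)\,b_\sigma(\lambda,r)\,\lambda$. But Lemma \ref{lem3.4} requires the symbol to lie in $S^\sigma$, i.e.\ to be $C^\infty$ with derivative bounds of every order, whereas the hypothesis of Lemma \ref{lem2.6} only gives $m$ (hence each $m_j(\lambda)=m(\lambda)\,\beta(2^{-j}\lambda)$) a finite Sobolev regularity $L^2_{s_0}$ or $L^2_{s_\infty}$. So the pointwise oscillatory-integral decay cannot be invoked; the actual argument must run through Cauchy--Schwarz in $\lambda$ against the weighted $L^2$ norm of $m_j$, and that is precisely where the thresholds $3/2$ and $\max\{3/2,(n+1)/2\}$ are forced, not read off from a symbol-class dimension count. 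Beyond that, the integrated H\"ormander estimate over all $R\in\SR$ and all scales $j$ --- the genuinely hard part, involving the admissible geometry, the modular weight $\delta^{1/2}$, and the exponential volume growth --- is asserted rather than proved. A self-contained proof would need to reproduce the kernel estimates of \cite{Va} (building on \cite{HeSt}); as it stands, your text is a correct roadmap but not a proof.
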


\begin{lemma}[{\cite[Theorem 5.3]{LiVaYa}}]\label{lem2.7}
Denote by $\Sigma$ the closed strip $\{\zeta\in\mathbb{C}:\Re \zeta\in[0,1]\}$. Suppose that $\{T_\zeta\}_{\zeta\in\Sigma}$ is a family of uniformly bounded operators on $L^2(G)$ such that the map $s\mapsto \int_G T_\zeta(f)\, g\,\d \rho$ is continuous on $\Sigma$ and analytic in the interior of $\Sigma$, whenever $f,g\in L^2(G)$. Moreover, assume that
\begin{align*}
\|T_\zeta f\|_{L^1(G)}\leq A_1\|f\|_{H^1(G)}\quad\mbox{for all }f\in L^2(G)\cap H^1(G) \mbox{ and }\Re\zeta=0,
\end{align*}
and
\begin{align*}
\|T_\zeta f\|_{L^2(G)}\leq A_2\|f\|_{L^2(G)}\quad\mbox{for all }f\in L^2(G)\mbox{ and }\Re\zeta=1,
\end{align*}
where $A_1$ and $A_2$ are positive constants independent of $\Im \zeta\in\R$. Then for every $\vartheta\in(0,1)$ the operator $T_\vartheta$ is bounded on $L^{q_\vartheta}$ with $1/q_\vartheta=1-\vartheta/2$ and
\begin{align*}
\|T_\vartheta f\|_{L^{q_\vartheta}}\leq A_1^{1-\vartheta} A_2^\vartheta\|f\|_{L^{q_\vartheta}}\quad\mbox{for all }f\in L^2(G)\cap L^{q_\vartheta}.
\end{align*}
\end{lemma}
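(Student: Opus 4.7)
The plan is to run Stein's analytic interpolation theorem in a form that accommodates the atomic Hardy space $H^1(G)$ at the endpoint $\Re\zeta=0$. After normalizing $A_1=A_2=1$ and using that $L^2(G)\cap L^{q_\vartheta}(G)$ is dense in $L^{q_\vartheta}(G)$ (since $1<q_\vartheta<2$), it suffices by duality to establish
\[
\bigl|\langle T_\vartheta f,\,g\rangle\bigr|\leq 1
\]
for simple, compactly supported $f$ with $\|f\|_{L^{q_\vartheta}(G)}=1$ and $g$ with $\|g\|_{L^{q_\vartheta'}(G)}=1$.

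For the dual side I would use the standard modulation $g_\zeta(x):=\exp(\tfrac12 q_\vartheta'\,\zeta\,\log|g(x)|)\,\operatorname{sgn}(g(x))$, so that $g_\vartheta=g$, $\|g_\zeta\|_{L^\infty(G)}\leq 1$ on $\Re\zeta=0$, and $\|g_\zeta\|_{L^2(G)}\leq 1$ on $\Re\zeta=1$, with manifestly holomorphic dependence on $\zeta$. The symmetric choice for $f$ fails at once, because on $\Re\zeta=0$ it would only control $\|f_\zeta\|_{L^1(G)}$, whereas our hypothesis requires control of $\|f_\zeta\|_{H^1(G)}$.

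To produce an admissible $f_\zeta$, I would first run a layer-cake decomposition of $f$ at the dyadic levels $\{|f|>2^k\}$ and at each level apply a Calder\'on-Zygmund decomposition over the family $\SR$ (available on $G$ thanks to the three properties stated after Definition~\ref{def2.3}: subdivisibility, $\rho(R^*)\simeq\rho(R)$, and the ball-like control \eqref{e2.12}). Telescoping the good parts produces a representation $f=\sum_j\lambda_j\,a_j$, where each $a_j$ is a mean-zero $L^\infty$-function supported in some $R_j\in\SR$ with $\|a_j\|_{L^\infty}\leq\rho(R_j)^{-1/q_\vartheta}$, and $\sum_j|\lambda_j|^{q_\vartheta}\lesssim 1$. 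Then modulate coefficients and normalizations holomorphically by
\[
f_\zeta:=\sum_j\operatorname{sgn}(\lambda_j)\,|\lambda_j|^{q_\vartheta(1-\zeta/2)}\,\rho(R_j)^{1/q_\vartheta-1+\zeta/2}\,a_j.
\]
One checks that $f_\vartheta=f$ (the $\rho(R_j)$-exponent vanishes at $\zeta=\vartheta$ and $q_\vartheta(1-\vartheta/2)=1$); on $\Re\zeta=0$ each summand becomes, up to a factor of modulus one, a genuine $H^1(G)$-atom on $R_j$, so $\|f_\zeta\|_{H^1(G)}\lesssim\sum_j|\lambda_j|^{q_\vartheta}\lesssim 1$; and on $\Re\zeta=1$ the disjoint supports at each dyadic level together with the $L^\infty$ bounds on the $a_j$ give $\|f_\zeta\|_{L^2(G)}^2\lesssim\sum_j|\lambda_j|^{q_\vartheta}\lesssim 1$.

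With both analytic families in hand, set $F(\zeta):=\langle T_\zeta f_\zeta,\,g_\zeta\rangle$. The hypotheses on $\{T_\zeta\}_{\zeta\in\Sigma}$ render $F$ bounded and continuous on $\Sigma$ and analytic in its interior. On $\Re\zeta=0$ the $H^1(G)\!\to\! L^1(G)$ bound combined with $\|g_\zeta\|_{L^\infty}\leq 1$ yields $|F(\zeta)|\leq A_1\|f_\zeta\|_{H^1(G)}\lesssim 1$; on $\Re\zeta=1$ the $L^2(G)\!\to\! L^2(G)$ bound and Cauchy-Schwarz give $|F(\zeta)|\leq A_2\|f_\zeta\|_{L^2}\|g_\zeta\|_{L^2}\lesssim 1$. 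The Hadamard three-lines lemma then delivers $|F(\vartheta)|=|\langle T_\vartheta f,g\rangle|\leq A_1^{1-\vartheta}A_2^\vartheta$, which is the desired bilinear estimate. The main obstacle is the holomorphic atomic construction of $f_\zeta$: one must simultaneously secure (i) holomorphy in $\zeta$, (ii) $f_\vartheta=f$, (iii) uniform $H^1(G)$-atomic control on $\Re\zeta=0$, and (iv) uniform $L^2(G)$ control on $\Re\zeta=1$. This rests on the Calder\'on-Zygmund structure of $(G,d,\d\rho)$ from \cite{HeSt,Va} and the atomic description of $H^1(G)$ in Definition~\ref{def2.4}, via the layer-cake coupling between $\ell^{q_\vartheta}$ coefficient control and the atomic/$L^\infty$ normalizations.
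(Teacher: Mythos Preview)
The paper does not prove this lemma; it is quoted verbatim from \cite[Theorem 5.3]{LiVaYa} without argument, so there is no in-paper proof to compare against. Your sketch is the standard Fefferman--Stein route to complex interpolation with a Hardy-space endpoint, and it is essentially how the cited reference proceeds: one builds a holomorphic family $f_\zeta$ out of a $(q_\vartheta,\infty)$-atomic decomposition of $f$ over Calder\'on--Zygmund sets in $\SR$, pairs with the usual modulation $g_\zeta$, and applies the three-lines lemma to $F(\zeta)=\langle T_\zeta f_\zeta,g_\zeta\rangle$. One small point worth tightening: the $L^2$ control of $f_\zeta$ on $\Re\zeta=1$ requires more than disjointness at each dyadic level, since atoms coming from different levels can overlap; the standard fix (also used in \cite{LiVaYa}) is to exploit the layer-cake structure so that $\sum_j |\lambda_j|^{q_\vartheta}\,\rho(R_j)^{2/q_\vartheta-1}\,\mathbf{1}_{R_j}$ is dominated pointwise by a fixed multiple of $|f|^{q_\vartheta(2/q_\vartheta-1)}\cdot\mathbf{1}_{\{f\neq 0\}}$-type quantities, yielding $\|f_\zeta\|_{L^2}^2\lesssim \|f\|_{L^{q_\vartheta}}^{q_\vartheta}$ by bounded overlap across levels. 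With that adjustment your outline is correct.
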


The following lemma is needed in the proof of the sufficiency part of Theorem \ref{thm1.1}.
\begin{lemma}\label{thm3.2}
Let $m$ be an even function with $m\in S^{-1}$. Then $|\nabla m(\sqrt{L})\cdot|_{\mathbf{g}}$ is bounded from $H^1(G)$ to $L^1(G)$, where $\nabla$ is the gradient given by \eqref{e2.1} and $|\cdot|_{\mathbf{g}}$ is the norm induced by the Riemannian metric $\g$ given by \eqref{riem}.
\end{lemma}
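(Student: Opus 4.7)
The plan is to apply the singular integral theorem, Lemma~\ref{lem2.5}, to each of the linear operators $Vm(\sqrt{L})$ for $V\in\{X,Y_1,\dots,Y_n\}$ separately. Because $|\nabla f|_{\mathbf{g}}^2=|Xf|^2+\sum_{j=1}^n|Y_jf|^2$ in the basis \eqref{e1.2}, Minkowski delivers
\[\bigl\||\nabla m(\sqrt{L})f|_{\mathbf{g}}\bigr\|_{L^1(G)}\le\sum_{V}\|Vm(\sqrt{L})f\|_{L^1(G)},\]
so it is enough to obtain the $H^1\to L^1$ bound for every such $V$. The $L^2$-boundedness will be immediate: integration by parts against $\d\rho=x^{-1}\,\d x\,\d y$ shows that each $V\in\{X,Y_1,\dots,Y_n\}$ is skew-adjoint on $L^2(G)$, so $-V^2\ge 0$ and $L=-X^2-\sum_jY_j^2\ge -V^2$ in the operator sense; hence $\|Vh\|_2\le\|\sqrt{L}h\|_2$, giving $\|Vm(\sqrt{L})f\|_2\le\|\sqrt{L}m(\sqrt{L})f\|_2\lesssim\|f\|_2$ since $\lambda m(\lambda)\in L^\infty$ for $m\in S^{-1}$.

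For the H\"ormander kernel condition, by left-invariance the convolution kernel of $Vm(\sqrt{L})$ is $Vk_m$; using the factorization $k_m=\delta^{1/2}\kappa_m$ with $\kappa_m$ radial and the Leibniz rule, I would split
\[Vk_m=V(\delta^{1/2})\,\kappa_m+\delta^{1/2}\,(VR)\,\partial_r\kappa_m.\]
Since $\delta^{1/2}=x^{-n/2}$, we have $V(\delta^{1/2})=-c_V\delta^{1/2}$ with $c_V=n/2$ if $V=X$ and $c_V=0$ if $V=Y_j$; thus the first summand equals $-c_Vk_m$, the convolution kernel of $-c_Vm(\sqrt{L})$, which is handled directly by Lemma~\ref{lem2.6} since $m\in S^{-1}\subset S^0$ trivially satisfies its Sobolev hypothesis. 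Moreover, $|VR|\le|V|_{\mathbf{g}}|\nabla R|_{\mathbf{g}}\le 1$ because $R$ is the Riemannian distance to $e$ and $X,Y_j$ are unit left-invariant vector fields.

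The main work will lie with the second summand $\delta^{1/2}(VR)\partial_r\kappa_m$. By formula \eqref{e3.5},
\[\delta^{1/2}(x,y)\,\partial_r\kappa_m(R(x,y))=\delta^{1/2}(x,y)\int_\R m(\lambda)\,F_{R(x,y)}^{(1)}(\lambda)\,\lambda\,\d\lambda,\]
and Lemma~\ref{lem3.3} tells us that $F_r^{(1)}$ has precisely the asymptotic form of $F_r^{(0)}$ with its $\lambda$-symbol raised by one order. Morally then this is, up to lower-order remainders, the convolution kernel of $\tilde m(\sqrt{L})$ for the $S^0$-multiplier $\tilde m(\lambda)=\lambda m(\lambda)$, pre-multiplied by the bounded position-dependent factor $VR$. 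To make this rigorous I would perform a Littlewood--Paley decomposition $m=\sum_jm_j$ with $m_j$ supported in $\lambda\sim 2^j$, substitute the asymptotics of Lemma~\ref{lem3.3} separately in the regimes $R(x,y)\le 1$ and $R(x,y)>1$, bound each resulting oscillatory integral via Lemma~\ref{lem3.4}, and then verify
\[\sup_{R\in\SR}\sup_{h,h'\in R}\int_{(R^*)^c}\bigl|(Vk_m)(h^{-1}g)\delta(h)-(Vk_m)({h'}^{-1}g)\delta(h')\bigr|\,\d\rho(g)<\infty\]
by summing a geometric series in $j$. The central obstacle is that the $(x,y)$-dependence of the coefficient $VR$ blocks a purely spectral reduction to Lemma~\ref{lem2.6}; the dyadic kernel estimates must be performed by hand, exploiting the exponential decay $\e^{-nr/2}$ in Lemma~\ref{lem3.3}(a) for $r>1$ (which cancels the exponential volume growth in Lemma~\ref{lem2.2}) and the uniform Lipschitz control on $VR$ furnished by the bounded Riemannian geometry of the hyperbolic space $G$.
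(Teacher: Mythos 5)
Your overall strategy coincides with the paper's: reduce $|\nabla m(\sqrt{L})f|_{\mathbf{g}}$ to the individual components $Vm(\sqrt{L})f$ for $V\in\{X,Y_1,\dots,Y_n\}$, establish $L^2$-boundedness, and then verify the H\"ormander-type integral condition for each $K_V$ so that Lemma~\ref{lem2.5} applies. Your kernel splitting $Vk_m=-c_Vk_m+\delta^{1/2}(VR)\,\partial_r\kappa_m$ is exactly what the paper computes (its displayed formulas for $K_0$ and $K_j$ in the proof of Lemma~\ref{lem3.5} are the same identity in disguise), and your $L^2$ argument via skew-adjointness of $V$ and $-V^2\le L$ is a pleasant, self-contained replacement for the paper's appeal to the Riesz-transform theorem of Hebisch--Steger.

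The gap is in the treatment of the second summand. The condition in Lemma~\ref{lem2.5} controls the \emph{difference} $K_V(g,h)-K_V(g,h')$, and whether you estimate this by a mean-value argument (as the paper does, integrating $\nabla_h K_V$ along a geodesic from $h$ to $h'$) or by your dyadic strategy in the regime $2^j r_R\le 1$, you must differentiate the kernel once more in $h$. Since the second summand already carries one $r$-derivative $\partial_r\kappa_m$, the needed bound is on $\partial_r^2\kappa_m$, i.e., on $F^{(2)}_r(\lambda)$ from Lemma~\ref{lem3.3} with $i=2$, not merely $i=1$. Your proposal only invokes $F^{(1)}_r$, and the sentence about ``uniform Lipschitz control on $VR$'' addresses the geometry but not the variation of $\partial_r\kappa_m$ itself. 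The phrase ``summing a geometric series in $j$'' is where the missing estimate would have to live: the gain $\sim 2^j d(h,h')$ for the low-frequency pieces comes precisely from the extra derivative, and without a bound like $|\kappa_m''(r)|\lesssim r^{-n-2}$ for $r\le 1$ and $|\kappa_m''(r)|\lesssim_N r^{-N}\e^{-nr/2}$ for $r>1$ (these are what the paper's Lemma~\ref{lem3.5} produces from the $i=2$ case) the series does not close. Also note the first-variable derivatives of $d(\cdot,h)$ along $X,Y_j$ need to be controlled up to second order, since differentiating $K_V$ again in $h$ produces terms like $XY_\ell d(g,h)$; the paper states and uses their boundedness explicitly, and you should too. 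Incorporating the $i=2$ asymptotics and these second-order geometric bounds would complete your argument.
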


The proof of Lemma~\ref{thm3.2} can be obtained using an argument in \cite{SjVa}, where the Riesz transform $\nabla L^{-1/2}$ is proved to be bounded from $H^1(G)$ to $L^1(G)$. To show Lemma~\ref{thm3.2}, we start by writing down the Schwartz kernel $K$ of $\nabla m(\sqrt{L})$ using \eqref{e3.5}. Next we use Lemmas \ref{lem3.3} and \ref{lem3.4} to deduce that the first order derivatives of $K(g,h)$ are bounded by constant (depending on $N$) multiple of $\delta^{1/2}(gh)\,\varrho_N(d(g,h))$, where $\varrho_N(r)=r^{-n-2}$ if $0<r\leq 1$ and $\varrho_N(r)=r^{-N} \e^{-nr/2}$ for all $N=1,2,\dots$ if $r>1$. Then we apply Lemma \ref{lem2.5} to obtain the desired result. We will give a brief argument of this proof
in Appendix \ref{appa}
for completeness and the convenience of the reader.

\medskip


\section{Explicit asymptotic expansions of spherical functions}\label{sec4}
Let $\d\sigma_t$ be the normalized spherical measure of radius $t$ on $G$, i.e.
\begin{align*}
\int_G f\,\d\sigma_t={1\over\nu_n}\int_{\SS^n}f(g(t,\omega))\,\d\omega,
\end{align*}
where by $g(r,\omega)$ we denote the tuple $(x(r,\omega),y(r,\omega))$ for $x(r,\omega)$ and $y(r,\omega)$ as in \eqref{pol}. It follows that the spherical function $\varphi_\lambda(t)$ given by \eqref{e2.10} is the spherical Fourier transform of $\d\sigma_t$, i.e.,
\begin{align*}
\F(\d\sigma_t)(\lambda)=\varphi_\lambda(t),
\end{align*}
which implies
\begin{align*}
\varphi_{\sqrt{\Delta_n}}(t)(f)(x,y)=f*\d\sigma_t(x,y)={1\over \nu_n}\int_{\SS^n}f((x,y)\cdot g(t,\omega))\,\d \omega
\end{align*}
according to Lemma \ref{lem3.1}. By \eqref{e3.3}, we have
\begin{align}
\varphi_{\sqrt{L}}(t)(f)(x,y)&=\delta^{1/2}(x,y)\,\varphi_{\sqrt{\Delta_n}}(t)(\delta^{-1/2}f)(x,y)\notag\\
&={1\over\nu_n}\int_{\SS^n}\delta^{-1/2}(g(t,\omega))\, f((x,y)\cdot g(t,\omega))\,\d \omega.\label{e4.1}
\end{align}
\smallskip

We now give the explicit asymptotic expansions of $\varphi_\lambda(t)$ and $\varphi_\lambda'(t)$ for large $\lambda$.
\begin{proposition}\label{prop4.1}
Suppose $t>0$ and $\lambda\geq \max\{1,t^{-1}\}$.
\begin{asparaenum}[\rm (a)]
\item For $\varphi_\lambda(t)$ we have
\begin{align*}
\varphi_\lambda(t)=2^{n/2}\Gamma\!\left({n+1\over 2}\right) (\sinh t)^{-n/2} \lambda^{-n/2}\cos(t\lambda-n\uppi/4)+\mathfrak{r}_t(\lambda),
\end{align*}
where
\begin{align*}
\mathfrak{r}_t(\lambda)=\begin{cases}
\e^{\i t\lambda}a_t(t\lambda)+\e^{-\i t\lambda}a_t(-t\lambda)&\mbox{if }0<t\leq 1 \mbox{ and }t\lambda\geq 1,\\
\e^{-nt/2}\left(\e^{\i t\lambda}a_t(\lambda)+\e^{-\i t\lambda}a_t(-\lambda)\right)&\mbox{if }t>1 \mbox{ and }\lambda\geq 1
\end{cases}
\end{align*}
for some $a_t\in S^{-1-n/2}$ uniformly in $t>0$.
\item For $\varphi_\lambda'(t)$ we have
\begin{align*}
\varphi_\lambda'(t)=-2^{n/2}\Gamma\!\left({n+1\over 2}\right) (\sinh t)^{-n/2} \lambda^{1-n/2}\sin(t\lambda-n\uppi/4)+\mathfrak{s}_t(\lambda),
\end{align*}
where
\begin{align*}
\mathfrak{s}_t(\lambda)=\begin{cases}
t^{-1}\left(\e^{\i t\lambda}b_t(t\lambda)+\e^{-\i t\lambda}b_t(-t\lambda)\right)&\mbox{if }0<t\leq 1 \mbox{ and }t\lambda\geq 1,\\
\e^{-nt/2}\left(\e^{\i t\lambda}b_t(\lambda)+\e^{-\i t\lambda}b_t(-\lambda)\right)&\mbox{if }t>1 \mbox{ and }\lambda\geq 1
\end{cases}
\end{align*}
for some $b_t\in S^{-n/2}$ uniformly in $t>0$.
\end{asparaenum}
\end{proposition}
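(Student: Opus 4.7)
The plan is to apply an endpoint-contribution asymptotic analysis directly to the integral representation \eqref{e2.10}, writing the oscillatory integrand $\e^{\i\lambda s}(\cosh t-\cosh s)^{n/2-1}$ as a sum of contributions near $s=\pm t$ (where the amplitude has an algebraic singularity/zero of order $(t\mp s)^{n/2-1}$) plus an interior piece. A smooth partition of unity $\chi$ supported near the endpoints isolates these three pieces. The interior part is $O_N(\lambda^{-N})$ by repeated integration by parts since the phase has no stationary point and the amplitude is smooth there, so it contributes only to the remainder $\mathfrak r_t$ (and likewise to $\mathfrak s_t$ in part (b)).

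For the piece near $s=t$, change variables $u=t-s$ and use $\cosh t-\cosh(t-u)=2\sinh(u/2)\sinh(t-u/2)$ to factor
\[
(\cosh t-\cosh(t-u))^{n/2-1}=u^{n/2-1}\cdot a(u,t),
\]
where $a(u,t):=(2\sinh(u/2)/u)^{n/2-1}\sinh(t-u/2)^{n/2-1}$ is smooth in $u$ with $a(0,t)=\sinh(t)^{n/2-1}$. Taylor-expanding $a(u,t)=\sum_k a_k(t)u^k$ and using the standard identity $\int_0^\infty \e^{-\i\lambda u}u^{n/2-1}\,\d u=\Gamma(n/2)\lambda^{-n/2}\e^{-\i\uppi n/4}$ produces a full asymptotic series in $\lambda^{-1}$; the leading endpoint contribution at $s=t$ is $\e^{\i\lambda t}\sinh(t)^{n/2-1}\Gamma(n/2)\e^{-\i\uppi n/4}\lambda^{-n/2}$. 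The symmetric endpoint at $s=-t$ supplies the complex-conjugate phase, and summing the two yields the $\cos(t\lambda-n\uppi/4)$ leading form of part (a) after multiplying by the overall prefactor in \eqref{e2.10}.

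Part (b) follows by running the same analysis on $\partial_t\varphi_\lambda(t)$. I would take the $t$-derivative of the integral representation, collecting a smooth contribution from the prefactor $(\sinh t)^{1-n}$, a boundary-type term (which vanishes for $n\ge 3$ and is absorbed via the integration-by-parts form already present in \eqref{e3.5} for $n=1,2$), and an interior term proportional to $\sinh t\int \e^{\i\lambda s}(\cosh t-\cosh s)^{n/2-2}\,\d s$. Rerunning the endpoint expansion above on each piece produces the stated leading term: the extra factor of $\lambda$ arises from $\partial_t\cos(t\lambda-n\uppi/4)=-\lambda\sin(t\lambda-n\uppi/4)$, and the remainder $\mathfrak s_t$ lies in the claimed symbol class.

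The main obstacle is verifying the prescribed symbol-class and uniformity statements for $a_t\in S^{-1-n/2}$ and $b_t\in S^{-n/2}$ in both regimes. For $0<t\le 1$ with $t\lambda\ge 1$, the substitution $u=t-s$ confines $u$ to $(0,2t)$, so after rescaling $v=\lambda u$ the effective large parameter becomes $t\lambda$, producing the $a_t(t\lambda)$ structure of $\mathfrak r_t$. For $t>1$ the amplitude factor $\sinh(t-u/2)^{n/2-1}$ contributes $\sim \e^{(n/2-1)t}$, which together with the prefactor $(\sinh t)^{1-n}\sim \e^{-(n-1)t}$ yields the overall $\e^{-nt/2}$ decay in $\mathfrak r_t$; symbol bounds on derivatives of $a(u,t)$ in $\lambda$ are then controlled uniformly in $t>1$ by hyperbolic identities. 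The case $n=1$, where $(\cosh t-\cosh s)^{-1/2}$ is non-integrable at the endpoints, is the delicate point and is most cleanly handled via the integration-by-parts form \eqref{e3.5} and Lemma~\ref{lem3.3} rather than directly from \eqref{e2.10}.
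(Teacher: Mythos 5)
Your plan---isolate the endpoint contributions at $s=\pm t$ by a smooth cutoff, treat the interior by non-stationary phase, and extract the leading Bessel-type asymptotic from the factorization $(\cosh t-\cosh s)^{n/2-1}=u^{n/2-1}a(u,t)$---is essentially what the paper does: for part (a) it simply cites the same endpoint computation from Tataru and Stein, and for part (b) it runs the identical split into the prefactor-derivative piece $\mathcal{J}_t$ (reduced to part (a)), a reduction of the interior piece to the $(n-2)$-case when $n\ge3$, and a cutoff-plus-integration-by-parts treatment of the endpoint singularity for $n=1,2$. Your idea of offloading the $n=1$ case to Lemma~\ref{lem3.3} is a mild detour (the paper works directly on \eqref{e2.10} with the cutoff $\chi(s/t)$, resp.\ $\chi(|s|-t)$), but it is a viable alternative since $\varphi_\lambda$ and $F_r$ are related through $|\bc(\lambda)|^{-2}$; note, though, that this route gives only the symbol class of the remainder, not the explicit leading constant. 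One small observation worth recording: if you actually carry through the Fourier--Laplace identity $\int_0^\infty \e^{-\i\lambda u}u^{n/2-1}\,\d u=\Gamma(n/2)\lambda^{-n/2}\e^{-\i n\uppi/4}$, multiply by the prefactor $C_n(\sinh t)^{1-n}$ from \eqref{e2.10} and sum the two endpoints, the leading coefficient is $2^{n/2}\Gamma((n+1)/2)/\sqrt{\uppi}$, not $2^{n/2}\Gamma((n+1)/2)$ as stated; one can confirm this from the $t\to 0$ limit $\varphi_\lambda(t)\to 2^{(n-1)/2}\Gamma((n+1)/2)(\lambda t)^{-(n-1)/2}J_{(n-1)/2}(\lambda t)$. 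The proposition statement appears to be missing a $1/\sqrt{\uppi}$; this is harmless for the applications, since only the nonvanishing of the constant and the symbol class of $\mathfrak r_t$, $\mathfrak s_t$ are ever used.
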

\begin{proof}
(a) follows directly from the proof of \cite[Lemma 2.2]{Ta1} and the argument on \cite[pp. 355--357]{St}.
\smallskip

We now show (b). We use \eqref{e2.10} to write
\begin{align*}
\varphi_\lambda'(t)&=\underbrace{C_n\,(\sinh t)^{1-n}\, \partial _t\!\left[\int_{-t}^t\e^{\i\lambda s}(\cosh t-\cosh s)^{n/2-1}\,\d s\right]}_{\mathcal{I}_t(\lambda)}\\
&\quad +\underbrace{(1-n){\cosh t\over \sinh t}C_n\,(\sinh t)^{1-n}\int_{-t}^t\e^{\i\lambda s}(\cosh t-\cosh s)^{n/2-1}\,\d s}_{\mathcal{J}_t(\lambda)},
\end{align*}
where
\begin{align*}
C_n={2^{n/2-1}\Gamma((n+1)/2)\over \sqrt{\uppi}\,\Gamma(n/2)}
\end{align*}
is the constant in \eqref{e2.10}. In comparison with \eqref{e2.10}, we have
\begin{align}\label{e4.2}
\mathcal{J}_t(\lambda)=(1-n){\cosh t\over \sinh t}\varphi_\lambda(t),
\end{align}
which by (a) is of form $\mathfrak{s}_t(\lambda)$.

It remains to show that $\mathcal{I}_t(\lambda)$ enjoys the desired asymptotic behavior. If $n\geq 3$ we directly write
\begin{align*}
\mathcal{I}_t(\lambda)={n-1\over \sinh t}C_{n-2}\,(\sinh t)^{1-(n-2)}\int_{-t}^t\e^{\i\lambda s}(\cosh t-\cosh s)^{(n-2)/2-1}\,\d s.
\end{align*}
In comparison with \eqref{e2.10} and by (a) we obtain the desired result.

The case $n=1,2$ is much trickier. We set an even function $\chi \in C^\infty_c(\R)$ with $\supp \chi\subset [-1/2,1/2]$ such that $\chi\equiv 1$ on $[-1/4,1/4]$. For $0<t\leq 1$ we write
\begin{align*}
\mathcal{I}_t(\lambda)&=C_n\underbrace{(\sinh t)^{1-n} \int_\R\e^{\i\lambda s}\partial_t\!\left[\chi\!\left({s\over t}\right)(\cosh t-\cosh s)^{n/2-1}\right]\,\d s}_{\mathcal{I}_{1,t}(\lambda)}\\
&\quad +C_n\underbrace{(\sinh t)^{1-n}\, \partial _t\!\left[\int_{-t}^t\e^{\i\lambda s}\left(1-\chi\!\left({s\over t}\right)\right)(\cosh t-\cosh s)^{n/2-1}\,\d s\right]}_{\mathcal{I}_{2,t}(\lambda)}.
\end{align*}
Integrating by parts $N$ times for $N=1,2,\dots,$, we have
\begin{align*}
|\partial^k_\lambda\mathcal{I}_{1,t}(\lambda)|\leq C_N \,t^{-1}(t\lambda)^{-N}\quad\mbox{for all }k=0,1,2,\dots.
\end{align*}
In the meantime, by integration by parts,
\begin{align*}
&\mathcal{I}_{2,t}(\lambda)\\
=&\i\,\lambda \,(\sinh t)^{2-n}\int_{-t}^t\e^{\i\lambda s}{1-\chi(s/t)\over\sinh s}(\cosh t-\cosh s)^{n/2-1}\,\d s\\
&+{2\i\over n}\lambda \,(\sinh t)^{1-n}\int_{-t}^t\e^{\i\lambda s}{s\,\chi'(s/t)\over t^2\sinh s}(\cosh t-\cosh s)^{n/2}\,\d s\\
&-(\sinh t)^{2-n}\int_{-t}^t\e^{\i\lambda s}\left({\chi'(s/t)\over t\,\sinh s}+{(1-\chi(s/t))\,\cosh s\over (\sinh s)^2}\right)(\cosh t-\cosh s)^{n/2-1}\,\d s\\
&-{2\over n}(\sinh t)^{1-n}\int_{-t}^t\e^{\i\lambda s}\left({s\,\chi''(s/t)-t\, \chi'(s/t)\over t^3\sinh s}+{s\,\chi'(s/t)\,\cosh s\over t^2(\sinh s)^2}\right)(\cosh t-\cosh s)^{n/2}\,\d s.
\end{align*}
Following the proof of \cite[Lemma 2.2]{Ta1} and the argument on \cite[pp. 355--357]{St}, we obtain the desired asymptotic expansion.

For $t>1$ we write
\begin{align*}
\mathcal{I}_t(\lambda)&=C_n\underbrace{(\sinh t)^{1-n} \int_{-t}^t\e^{\i\lambda s}\partial_t\!\left[(1-\chi(|s|-t))\,(\cosh t-\cosh s)^{n/2-1}\right]\,\d s}_{\mathcal{I}_{1,t}(\lambda)}\\
&\quad +C_n\underbrace{(\sinh t)^{1-n}\, \partial _t\!\left[\int_{-t}^t\e^{\i\lambda s}\chi(|s|-t)\,(\cosh t-\cosh s)^{n/2-1}\,\d s\right]}_{\mathcal{I}_{2,t}(\lambda)}.
\end{align*}
Integrating by parts $N$ times for $N=1,2,\dots$, we have
\begin{align*}
|\partial^k_\lambda\mathcal{I}_{1,t}(\lambda)|\leq C_N\,\e^{-nt/2}\lambda^{-N}\quad\mbox{for all }k=0,1,2,\dots.
\end{align*}
In the meantime, one can use the above argument dealing with $\mathcal{I}_{2,t}(\lambda)$ for $0<t\leq 1$ to show that for $t>1$ $\mathcal{I}_{2,t}(\lambda)$ possesses the desired asymptotic expansion.
\end{proof}
\medskip


\section{Proof of Theorem \ref{thm1.1}: the sufficiency part}\label{sec5}
In this section we show the sufficiency part of Theorem \ref{thm1.1}. For the purpose we prove the following result.
\begin{theorem}\label{thm5.1}
Let $G$ be the group $\R_+\ltimes \R^n$ which is of $ax+b$ type, and $L$ be the distinguished left invariant Laplacian on $G$. Assume $t\in\R$.
\begin{asparaenum}[\rm (i)]
\item
Let $1<p<\infty$ and $\alpha_0=n|1/p-1/2|$ and $\alpha_1=n|1/p-1/2|-1$. The solution $u=u(t,\cdot)$ of the wave equation \eqref{e1.4} satisfies
\begin{align*} 
\|u(t,\cdot)\|_{L^p(G)}\leq C_p\,\Big((1+|t|)^{2|1/p-1/2|}\|f\|_{L^p_{\alpha_0}\hspace{-0.03cm}(G)}+(1+|t|)\,\|g\|_{L^p_{\alpha_1}\hspace{-0.05cm}(G)}\Big).
\end{align*}

\item
Let $\alpha_0=n/2$ and $\alpha_1=n/2-1$. Then we have 
\begin{align*} 
\|u(t,\cdot)\|_{L^1(G)}\leq C\,(1+|t|)\big(\|(\Id+L)^{\alpha_0/2}f\|_{H^1(G) }+\|(\Id+L)^{{\alpha_1/2} }g\|_{H^1(G)}\big).
\end{align*}
\end{asparaenum}
\end{theorem}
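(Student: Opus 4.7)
The plan is to first prove the $H^1\!\to\!L^1$ endpoint (part (ii)) and then deduce (i) by Stein's complex interpolation (Lemma~\ref{lem2.7}) applied to the analytic family $T_\zeta:=(\Id+L)^{-(1-\zeta)n/4}\cos(t\sqrt L)$ (and its sine analogue with index shifted by $1$), using the trivial $L^2$-bounds $\|\cos(t\sqrt L)\|_{L^2\to L^2}\le 1$, $\|(\Id+L)^{1/2}\sin(t\sqrt L)/\sqrt L\|_{L^2\to L^2}\le C(1+|t|)$, together with duality (via self-adjointness on $L^2(G,\d\rho)$) to pass from $1<p<2$ to $2<p<\infty$. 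The core task is therefore
\begin{align*}
\bigl\|(\Id+L)^{-n/4}\cos(t\sqrt L)f\bigr\|_{L^1}+\Bigl\|(\Id+L)^{-(n/2-1)/2}\tfrac{\sin(t\sqrt L)}{\sqrt L}g\Bigr\|_{L^1}\le C(1+|t|)\bigl(\|f\|_{H^1}+\|g\|_{H^1}\bigr).
\end{align*}
For $|t|\le 1$ the underlying multipliers lie uniformly in $S^{-n/2}$, so Lemma~\ref{lem2.6} applies directly; the essential case is $|t|\ge 1$.

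For $|t|\ge 1$ I would factorize the wave propagator through Proposition~\ref{prop4.1}. Combining parts (a) and (b) via $\cos(t\lambda-n\pi/4)=\cos(n\pi/4)\cos(t\lambda)+\sin(n\pi/4)\sin(t\lambda)$ and its sine analogue, one inverts the leading asymptotics to get, for $\lambda\ge 1$,
\begin{align*}
\cos(t\lambda)=\kappa^{-1}(\sinh t)^{n/2}\lambda^{n/2}\Bigl[\cos(\tfrac{n\pi}{4})\varphi_\lambda(t)+\sin(\tfrac{n\pi}{4})\varphi_\lambda'(t)/\lambda\Bigr]+E_t(\lambda),
\end{align*}
with $\kappa=2^{n/2}\Gamma((n+1)/2)$ and $E_t(\lambda)=\e^{\i t\lambda}c_t^+(\lambda)+\e^{-\i t\lambda}c_t^-(\lambda)$ where $c_t^\pm\in S^{-1}$ uniformly in $t$ (the factors $(\sinh t)^{n/2}\e^{-nt/2}$ arising in the inversion are $O(1)$ for $t\ge 1$). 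After the usual high/low frequency cutoff $1=\chi_0+\chi_1$ with $\chi_0\in C_c^\infty([0,2])$ (the low part handled by Lemma~\ref{lem2.6}), spectral calculus delivers
\begin{align*}
(\Id+L)^{-n/4}\cos(t\sqrt L)\chi_1(\sqrt L)=M(\sqrt L)\Bigl[\cos(\tfrac{n\pi}{4})(\sinh t)^{n/2}T_t+\sin(\tfrac{n\pi}{4})(\sinh t)^{n/2}L^{-1/2}T_t'\Bigr]\chi_1(\sqrt L)+\mathcal E_t,
\end{align*}
with $T_tf:=\varphi_{\sqrt L}(t)f=f*\d\sigma_t$, $T_t'f:=\varphi_{\sqrt L}'(t)f$, and $M(\lambda):=\kappa^{-1}\lambda^{n/2}(1+\lambda^2)^{-n/4}\chi_1(\lambda)\in S^0$ bounded $H^1\!\to\!L^1$ by Lemma~\ref{lem2.6}. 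The remainder $\mathcal E_t$ is $\e^{\pm\i t\sqrt L}$ composed with a multiplier of order $-(n+2)/2$, which is below the critical threshold, so Lemmas~\ref{lem3.3}--\ref{lem3.4} yield an integrable kernel and hence $H^1\!\to\!L^1$ boundedness with norm $O(1+|t|)$.

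It remains to bound the two bracket operators with $H^1\!\to\!L^1$ norm $O(1+|t|)$. For $(\sinh t)^{n/2}T_t$, formula \eqref{e4.1} together with right-invariance of $\d\rho$ gives
\begin{align*}
\|T_tf\|_{L^1}\le\tfrac{\|f\|_{L^1}}{\nu_n}\int_{\SS^n}\bigl((\omega_{n+1}+\coth t)\sinh t\bigr)^{-n/2}\,\d\omega\le C(1+|t|)\e^{-nt/2}\|f\|_{L^1}
\end{align*}
by Lemma~\ref{lem2.1}, which after multiplication by $(\sinh t)^{n/2}\asymp\e^{nt/2}$ yields the claim on $L^1\supset H^1$. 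For $(\sinh t)^{n/2}L^{-1/2}T_t'\chi_1(\sqrt L)$, differentiating \eqref{e4.1} in $t$ under the integral sign splits $T_t'f$ into a term carrying the coefficient $\partial_t[\delta^{-1/2}(g(t,\omega))]$ (again $L^1$-controlled via Lemma~\ref{lem2.1}) and a term carrying the directional derivative of $f$ along the unit geodesic tangent $\dot g(t,\omega)$, i.e.\ essentially $\delta^{-1/2}(g(t,\omega))(\nabla f)_\g\cdot\dot g(t,\omega)$; writing the latter as a spherical convolution composed with $|\nabla(\cdot)|_\g$ and noting that $L^{-1/2}\chi_1(\sqrt L)$ agrees with the even $S^{-1}$ multiplier $(\Id+L)^{-1/2}$ modulo a zeroth-order correction, Lemma~\ref{thm3.2} applied to $|\nabla(\Id+L)^{-1/2}|_\g$ delivers the desired $H^1\!\to\!L^1$ bound of size $C(1+|t|)$. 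The $\sin(t\sqrt L)/\sqrt L$ term is proved by the parallel factorization, the extra $L^{-1/2}$ accounting for the one-unit shift from $\alpha_0=n/2$ to $\alpha_1=n/2-1$. The main obstacle throughout is this second bracket: one must cleanly isolate the gradient hidden in $\partial_t\varphi_{\sqrt L}(t)$ so as to invoke the Riesz-type Lemma~\ref{thm3.2} without losing the $\e^{-nt/2}$ gain that balances $(\sinh t)^{n/2}$.
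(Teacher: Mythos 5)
Your overall strategy (establish the $H^1\!\to\!L^1$ endpoint by factorizing the propagator through the spherical function $\varphi_\lambda(t)$ and its $t$-derivative, then run Stein interpolation) matches the paper exactly, and your treatment of the case $|t|\ge 1$ — including the identification of the gradient term hidden in $\partial_t\varphi_{\sqrt L}(t)$ and the appeal to Lemma~\ref{thm3.2} — is essentially the same as Proposition~\ref{prop5.2}, Case~1.

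The genuine gap is the case $0<|t|\le 1$, which you dismiss in one sentence by claiming the multiplier lies uniformly in $S^{-n/2}$ so Lemma~\ref{lem2.6} applies directly. This is false: $m_t(\lambda)=(1+\lambda^2)^{-n/4}\cos(t\lambda)$ is \emph{not} in $S^{-n/2}$ uniformly in $t\in(0,1]$, because $\partial^k_\lambda\cos(t\lambda)$ only gains a factor $t^k$, not $(1+\lambda^2)^{-k/2}$, and these agree only when $t\lambda\lesssim 1$. Checking the hypothesis of Lemma~\ref{lem2.6}: for the dyadic pieces at scale $\tau>1/t$ one has $\|m_t(\tau\cdot)\beta(\cdot)\|_{L^2_{s_\infty}}\sim\tau^{-n/2}(1+t\tau)^{s_\infty}$, which is unbounded in $\tau$ precisely because the required $s_\infty>\max\{3/2,(n+1)/2\}$ exceeds $n/2$. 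This is exactly the Miyachi--Peral threshold phenomenon: even in Euclidean space, $(1+|\xi|^2)^{-n/4}\cos(t|\xi|)$ at the sharp Sobolev index is not a Mikhlin multiplier. The paper handles this by inserting a cutoff $\eta(t\lambda)$: the low piece $\eta(t\sqrt L)m(\sqrt L)\cos(t\sqrt L)$ (with symbol supported where $t\lambda\lesssim1$) does lie in $S^{-n/2}$ uniformly and is covered by Lemma~\ref{lem2.6}, but the high piece $(1-\eta(t\sqrt L))m(\sqrt L)\cos(t\sqrt L)$ requires a second run of the spherical-function factorization, now using the $0<t\le 1$ branch of Proposition~\ref{prop4.1} (whose remainder $\mathfrak r_t(\lambda)=\e^{\pm\i t\lambda}a_t(\pm t\lambda)$ scales differently from the $t>1$ branch) and a rescaled Littlewood--Paley decomposition indexed by $\ell\ge -\log t$. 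You need to supply this argument.

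Two smaller points. First, in your factorization for $t\ge1$ you place the $S^0$ multiplier $M(\sqrt L)$ outermost and $L^{-1/2}\chi_1(\sqrt L)$ innermost, then compose with the spherical average; as written this requires the inner operator to map $H^1\to H^1$, which you have not established. The fix (and the paper's arrangement) is to commute and absorb $M(\lambda)\lambda^{-1}\chi_1(\lambda)$ into a single $S^{-1}$ symbol sitting \emph{inside} the spherical average, so that the gradient lands on an $S^{-1}$ multiplier and Lemma~\ref{thm3.2} directly gives $H^1\to L^1$; the spherical average then only needs $L^1\to L^1$. Second, in the interpolation step the analytic family $T_\zeta=(\Id+L)^{-(1-\zeta)n/4}\cos(t\sqrt L)$ carries imaginary powers $(\Id+L)^{\i\tau n/4}$ on the line $\Re\zeta=0$ whose $H^1\to L^1$ norms grow in $\tau$; one needs the standard Gaussian damping factor $\e^{(\zeta-2+2/p)^2}$ (as in the paper's $m^\zeta_1,m^\zeta_2$) to keep the constants bounded before invoking Lemma~\ref{lem2.7}.
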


The following proposition plays an essential role in the proof of Theorem~\ref{thm5.1}. 
\begin{proposition}\label{prop5.2}
Let $t\in\R$ and $m\in S^{-\alpha}$ be an even symbol.
\begin{asparaenum}[\rm (a)]
\item If $\alpha=n/2$, then
\begin{align*}
\|m(\sqrt{L})\,\cos(t\sqrt{L})\|_{H^1(G)\to L^1(G)}\leq C\,(1+|t|) 
\end{align*}
for some constant $C>0$ independent of $t$.
\item If $\alpha=n/2-1$, then
\begin{align*}
\left\|m(\sqrt{L})\,{\sin(t\sqrt{L})\over \sqrt{L}}\right\|_{H^1(G)\to L^1(G)}\leq C\,(1+|t|) 
\end{align*}
for some constant $C>0$ independent of $t$.
\end{asparaenum}
\end{proposition}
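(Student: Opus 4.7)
Plan. Following the strategy of the introduction, we factor $m(\sqrt{L})\cos(t\sqrt{L})$ (for part (a), analogously for part (b)) into a spectral multiplier of order zero composed with a weighted spherical convolution that carries all of the $t$-dependence. By evenness of $\cos$ and $m$, assume $t\geq 0$.

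The key identity comes from Proposition~\ref{prop4.1}: for $\lambda \geq \max\{1,1/t\}$, rearranging (a) and (b) yields
\begin{align*}
\cos(t\lambda) = \frac{(\sinh t)^{n/2}\lambda^{n/2}}{C_n}\Bigl[\cos(n\uppi/4)\,\varphi_\lambda(t) + \sin(n\uppi/4)\,\lambda^{-1}\varphi_\lambda'(t)\Bigr] + E_t(\lambda),
\end{align*}
with $C_n = 2^{n/2}\Gamma((n+1)/2)$ and $E_t$ a remainder built from $\mathfrak{r}_t,\mathfrak{s}_t$. Fixing an even cutoff $\eta\in C^\infty(\R)$ that vanishes on $\{|\lambda|\leq \max(1,1/t)\}$ and equals $1$ on $\{|\lambda|\geq 2\max(1,1/t)\}$, the expansion gives
\begin{align*}
m(\sqrt{L})\eta(\sqrt{L})\cos(t\sqrt{L}) = \tilde M_1(\sqrt{L})\,(\sinh t)^{n/2}\varphi_{\sqrt{L}}(t) + \tilde M_2(\sqrt{L})\,(\sinh t)^{n/2}\varphi_{\sqrt{L}}'(t) + R_t(\sqrt{L}),
\end{align*}
where $\tilde M_1(\lambda)=\tfrac{\cos(n\uppi/4)}{C_n}m(\lambda)\eta(\lambda)\lambda^{n/2}\in S^0$ and $\tilde M_2(\lambda)=\tfrac{\sin(n\uppi/4)}{C_n}m(\lambda)\eta(\lambda)\lambda^{n/2-1}\in S^{-1}$ are both even.

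For the first main term, Lemma~\ref{lem2.6} bounds $\tilde M_1(\sqrt{L}):H^1(G)\to L^1(G)$. From~\eqref{e4.1} the operator $\varphi_{\sqrt{L}}(t)$ is a right-translation average over the sphere of radius $t$ weighted by $\delta^{-1/2}$; by right-invariance of $\d\rho$ and Lemma~\ref{lem2.1},
\begin{align*}
\|(\sinh t)^{n/2}\varphi_{\sqrt{L}}(t) h\|_{L^1(G)} \leq \tfrac{(\sinh t)^{n/2}}{\nu_n}\int_{\SS^n}\delta^{-1/2}(g(t,\omega))\,\d\omega\cdot\|h\|_{L^1(G)} \leq C(1+t)\|h\|_{L^1(G)},
\end{align*}
the factor $(\sinh t)^{n/2}\sim \e^{nt/2}$ precisely canceling the decay $\e^{-nt/2}$ in Lemma~\ref{lem2.1}. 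For the second main term, differentiate~\eqref{e4.1} in $t$: the derivative falls either on $\delta^{-1/2}(g(t,\omega))$ (controlled by Lemma~\ref{lem2.1}) or on $f$, producing a directional derivative along the unit $\g$-length geodesic direction $\partial_t g(t,\omega)$, hence bounded by $|\nabla f|_{\g}$. This yields
\begin{align*}
\|(\sinh t)^{n/2}\varphi_{\sqrt{L}}'(t) h\|_{L^1(G)} \leq C(1+t)\bigl(\|h\|_{L^1(G)} + \|\,|\nabla h|_{\g}\|_{L^1(G)}\bigr),
\end{align*}
and since $\tilde M_2 \in S^{-1}$ is even, Lemma~\ref{lem2.6} and Lemma~\ref{thm3.2} give $\tilde M_2(\sqrt{L}):H^1\to L^1$ and $|\nabla\tilde M_2(\sqrt{L})\cdot|_{\g}:H^1\to L^1$, so composition yields the required $C(1+t)$ bound.

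The main obstacle is to handle the residual pieces: the remainder $R_t(\sqrt{L})$, and the complementary low-frequency operator $m(\sqrt{L})(1-\eta(\sqrt{L}))\cos(t\sqrt{L})$. For $R_t(\sqrt{L})$, whose symbol is $\e^{\pm\i t\lambda}$ times a symbol of order $-1$, I would compute its Schwartz kernel from~\eqref{e3.5}: the combined phases $\e^{\i\lambda(r\pm t)}$ (from $F_r$ via Lemma~\ref{lem3.3} and from $R_t$) concentrate the kernel near $r=t$, and Lemma~\ref{lem3.4} delivers the required pointwise bound of the form $\delta^{1/2}(g)\varrho_N(d(g,e))$, which then feeds into Lemma~\ref{lem2.5}. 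The low-frequency piece, lying outside the range of Proposition~\ref{prop4.1}, is treated by the same kernel representation~\eqref{e3.5} with Lemmas~\ref{lem3.3}--\ref{lem3.4}, exploiting that $(1-\eta(\lambda))m(\lambda)$ has compact support in $\lambda$ and that $t$-growth enters only through derivatives of $\cos(t\lambda)$ on that compact support, which leads to the linear $(1+t)$ dependence. Part (b) runs the same scheme with $\sin(t\lambda)/\lambda$ in place of $\cos(t\lambda)$, producing multipliers in $S^{-1}$ and $S^{-2}$ that are handled identically by Lemmas~\ref{lem2.6} and~\ref{thm3.2}.
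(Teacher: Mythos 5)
Your overall strategy—factoring $m(\sqrt{L})\cos(t\sqrt{L})$ into an order-zero multiplier composed with the weighted spherical convolution $(\sinh t)^{n/2}\varphi_{\sqrt{L}}(t)$ and its $t$-derivative, then using Lemma~\ref{lem2.6}, Lemma~\ref{thm3.2}, and the exponential cancellation in Lemma~\ref{lem2.1}—is indeed the paper's approach, and your treatment of the two main terms is essentially correct. However, there are two genuine gaps in the residual pieces, both of which the paper handles quite differently.

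The serious problem is your treatment of the low-frequency operator $m(\sqrt{L})(1-\eta(\sqrt{L}))\cos(t\sqrt{L})$ when $t>1$. You claim that because $(1-\eta(\lambda))m(\lambda)$ has compact $\lambda$-support and ``$t$-growth enters only through derivatives of $\cos(t\lambda)$,'' the bound is $C(1+t)$. This is false as stated: each $\lambda$-derivative of $\cos(t\lambda)$ produces a factor of $t$, so any multiplier theorem requiring $s_\infty>1$ derivatives (as in Lemma~\ref{lem2.6}, where $s_\infty>\max\{3/2,(n+1)/2\}$) would give a constant growing like $t^{s_\infty}$, not $(1+t)$. The correct mechanism is finite propagation speed: the kernel of $\cos(t\sqrt{L})$ is supported in $\{d\leq t\}$, and combined with compact spectral support and Lemma~\ref{lem2.2}, this gives an $L^1$-kernel bound growing linearly in $t$. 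This is precisely what \cite[Theorem 8.1 (a)]{MuTh} provides, and the paper invokes it directly rather than attempting a derivative count. For $0<t\leq 1$ your argument is fine, since then $\eta(t\lambda)m(\lambda)\cos(t\lambda)\in S^0$ uniformly; the paper does the same via Lemma~\ref{lem2.6}.

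Your remainder treatment is also underspecified and the claimed kernel estimate looks confused: you say the combined phases $\e^{\i\lambda(r\pm t)}$ concentrate the kernel near $r=t$, yet you claim a bound of the form $\delta^{1/2}(g)\,\varrho_N(d(g,e))$, which decays away from the origin, not away from the sphere of radius $t$. The paper avoids this entirely by observing that on the support of $1-\eta$, the remainder $\mathfrak{r}_t(\lambda)$ can be rewritten as $\widetilde{\mathfrak{a}}_{1,t}(\lambda)\cos(t\lambda)+\widetilde{\mathfrak{a}}_{2,t}(\lambda)\sin(t\lambda)/\lambda$ with $\widetilde{\mathfrak{a}}_{1,t}\in S^{-1}$, $\widetilde{\mathfrak{a}}_{2,t}\in S^0$, and then again invokes \cite[Theorem 8.1]{MuTh} (or, for small $t$, runs a dyadic decomposition following the proof of that theorem) to obtain an $L^1\to L^1$ bound—no kernel estimate for the remainder is needed. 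Your kernel route might be made to work, but you would need to correctly track the concentration at $r=t$ and verify the hypotheses of Lemma~\ref{lem2.5}; as written it does not go through.
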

\begin{proof}
Without loss of generality we will assume that $t>0$. To prove (a), we suppose that $m\in S^{-n/2}$ is an even function, 
and consider two cases: $t>1$ and $0<t\leq 1$. 

\medskip

\noindent
{\bf Case 1: $t>1$}.
Let $\eta\in C^\infty_c(\R)$ be an even function satisfying $\eta\equiv 1$ on $[-1,1]$. It follows from \cite[Theorem 8.1 (a)]{MuTh} that
\begin{align*}
\|\eta(\sqrt{L})\,m(\sqrt{L})\,\cos(t\sqrt{L})\|_{L^1(G)\to L^1(G)} \leq C\,( 1+|t|) 
\end{align*}
for some constant $C>0$ independent of $t$. Therefore it remains to show that
\begin{align}\label{e5.8}
\|(1-\eta(\sqrt{L})\,m(\sqrt{L})\, \cos(t\sqrt{L})\|_{H^1(G)\to L^1(G)} \leq C\,( 1+|t|). 
\end{align}

To prove \eqref{e5.8}, the crucial observation is that in view of Proposition \ref{prop4.1}, on $\supp(1-\eta)$,
\begin{align}\label{e5.9}
&\cos(t\lambda)\\
&= c_1\,(\sinh t)^{n/2}\lambda^{n/2}\varphi_\lambda(t) 
+ c_2\,(\sinh t)^{n/2}\lambda^{n/2-1}\varphi_\lambda'(t) 
+ \underbrace{\e^{\i t\lambda}\mathfrak{a}_{1,t}(\lambda)+\e^{-\i t\lambda}\mathfrak{a}_{2,t}(\lambda)}_{\mathfrak{r}_t(\lambda)} \nonumber
\end{align}
with $\mathfrak{a}_{1,t}(\lambda), \mathfrak{a}_{2,t}(\lambda)\in S^{-1}$ uniformly in $t\geq 1$.
We claim that
\begin{align}\label{e5.10}
\|(1-\eta(\sqrt{L}))\,m(\sqrt{L})\,\mathfrak{r}_t(\sqrt{L})\|_{L^1(G)\to L^1(G)}\leq C\, t.
\end{align}
Indeed, noting that on $\supp (1-\eta)$ we can rewrite
\begin{align*}
\mathfrak{r}_t(\lambda)=\widetilde{\mathfrak{a}}_{1,t}(\lambda)\cos(t\lambda)+\widetilde{\mathfrak{a}}_{2,t}(\lambda){\sin(t\lambda)\over \lambda},
\end{align*}
where $\widetilde{\mathfrak{a}}_{1,t}(\lambda)\in S^{-1}$, $ \widetilde{\mathfrak{a}}_{2,t}(\lambda)\in S^0$ uniformly in $t>1$. Then \eqref{e5.10} immediately follows from \cite[Theorem 8.1]{MuTh}.

Now we consider the two terms $(\sinh t)^{n/2}\lambda^{n/2}\varphi_\lambda(t) 
$ and $ (\sinh t)^{n/2}\lambda^{n/2-1}\varphi_\lambda'(t)$ in \eqref{e5.9}. 
If we set
\begin{align*}
\mathfrak{m}_{1,t}(\lambda):=(\sinh t)^{n/2}\varphi_\lambda(t)\, \mathfrak{m}_1(\lambda),\quad \mathfrak{m}_{2,t}(\lambda):=(\sinh t)^{n/2}\varphi_\lambda'(t)\, \mathfrak{m}_2(\lambda)
\end{align*}
with
\begin{align*}
\mathfrak{m}_1(\lambda):=(1-\eta(\lambda))\, m(\lambda)\,\lambda^{n/2}\in S^0\quad\mbox{and}\quad\mathfrak{m}_2(\lambda):=(1-\eta(\lambda))\, m(\lambda)\,\lambda^{n/2-1}\in S^{-1},
\end{align*}
in view of \eqref{e5.9}, the proof of \eqref{e5.8} reduces to showing that
\begin{align}\label{e5.11}
\|\mathfrak{m}_{j,t}(\sqrt{L})\|_{H^1(G)\to L^1(G)}\leq C\, t, \quad j=1,2.
\end{align}

We now show \eqref{e5.11} for $j=1$. 
In view of Lemma \ref{lem2.6}, the operator $\mathfrak{m}_1(\sqrt{L})$ is bounded from $H^1(G)$ to $L^1(G)$. In addition, by \eqref{e4.1} and Lemma \ref{lem2.1}, we have
\begin{align*}
\|(\sinh t)^{n/2}\varphi_{\sqrt{L}}(t)\|_{L^1(G)\to L^1(G)}\leq C\,\e^{nt/2}\int_{\SS^n}\delta^{-1/2}(g(t,\omega))\,\d\omega\leq C\, t.
\end{align*}
Therefore,
\begin{align*}
\|\mathfrak{m}_{1,t}(\sqrt{L})\|_{H^1(G)\to L^1(G)}\leq \|(\sinh t)^{n/2}\varphi_{\sqrt{L}}(t)\|_{L^1(G)\to L^1(G)}\|\mathfrak{m}_1(\sqrt{L})\|_{H^1(G)\to L^1(G)}\leq C\, t,
\end{align*}
and \eqref{e5.11} holds for $j=1$.

To verify \eqref{e5.11} for $j=2$, we apply \eqref{e4.1} to write
\begin{align*}
&\mathfrak{m}_{2,t}(\sqrt{L})(f)(g)\\
=&(\sinh t)^{n/2}\partial_t\!\left[\varphi_{\sqrt{L}}(t)\,\mathfrak{m}_2(\sqrt{L})(f)(g)\right]\\
=&{1\over\nu_n}(\sinh t)^{n/2}\partial_t\!\left[\int_{\SS^n}\delta^{-1/2}(g(t,\omega))\,\mathfrak{m}_2(\sqrt{L})(f)(g\cdot g(t,\omega))\,\d\omega\right]\\
=&{1\over\nu_n}\underbrace{(\sinh t)^{n/2}\int_{\SS^n}\partial_t[\delta^{-1/2}(g(t,\omega))]\,\mathfrak{m}_2(\sqrt{L})(f)(g\cdot g(t,\omega))\,\d\omega}_{T_{1,t}(f)(g)}\\
&+{1\over\nu_n}\underbrace{(\sinh t)^{n/2}\int_{\SS^n}\delta^{-1/2}(g(t,\omega))\langle\nabla \mathfrak{m}_2(\sqrt{L})(f)(g\cdot g(t,\omega)),\partial_t[g\cdot g(t,\omega)]\rangle_{\mathbf{g}}\,\d \omega}_{T_{2,t}(f)(g)},
\end{align*}
where $\langle\cdot,\cdot\rangle_{\mathbf{g}}$ is the Riemannian metric on $G$ and $\nabla$ is the gradient given by \eqref{e2.1}. Now by Lemma \ref{lem2.1} and Lemma \ref{lem2.6},
\begin{align*}
\|T_{1,t}\|_{H^1(G)\to L^1(G)}\leq C\,\e^{nt/2}\int_{\SS^n}|\partial_t[\delta^{-1/2}(g(t,\omega))]|\,\d\omega\,\|\mathfrak{m}_2(\sqrt{L})\|_{H^1(G)\to L^1(G)}\leq C\,t.
\end{align*}
Meanwhile, since $g\cdot g(\cdot,\omega)$ is a geodesic in $G$, we have
\begin{align*}
|\partial_t[g\cdot g(t,\omega)]|_{\mathbf{g}}\equiv1\quad \mbox{for }t>0.
\end{align*}
So by Schwartz's inequality, Lemmas \ref{lem2.1} and \ref{thm3.2}, we can write
\begin{align*}
\|T_{2,t}(f)\|_{L^1(G)}\leq C\,\e^{nt/2}\int_{\SS^n}\delta^{-1/2}(g(t,\omega))\,\d \omega\,\||\nabla\mathfrak{m}_2(\sqrt{L})(f)|_{\mathbf{g}}\|_{L^1(G)}\leq C\,t\,\|f\|_{H^1(G)}.
\end{align*}
Combining these estimates together we obtain
\begin{align*}
\|\mathfrak{m}_{2,t}(\sqrt{L})\|_{H^1(G)\to L^1(G)}\leq C\,t,
\end{align*}
and estimate \eqref{e5.11} holds for $j=2$. From \eqref{e5.11}, we conclude the proof of (a) for $t>1$ in {\bf Case 1}.

\medskip

\noindent
{\bf Case 2: $0<t\leq 1$}. Let $\eta\in C^\infty_c(\R)$ be the cutoff function as above. We set $m_{t,0}(\lambda):=\eta(t\lambda)\,m(\lambda)\,\cos(t\lambda)$. Note that $m_{t,0}\in S^{-n/2}\subset S^0$ uniformly in $0<t\leq 1$. We apply Lemma \ref{lem2.6} to obtain
\begin{align*}
\|m_{t,0}(\sqrt{L})\|_{H^1(G)\to L^1(G)}\leq C.
\end{align*}
Therefore it remains to show that
\begin{align}\label{e5.4}
\|(1-\eta(t\sqrt{L}))\,m(\sqrt{L})\,\cos(t\sqrt{L})\|_{H^1(G)\to L^1(G)}\leq C.
\end{align}

In view of Proposition \ref{prop4.1}, on $\supp (1-\eta(t\cdot))$,
\begin{align}\label{e5.5}
&\cos(t\lambda)\\
&=c_1\, (\sinh t)^{n/2}\lambda^{n/2}\varphi_\lambda(t)+c_2\,(\sinh t)^{n/2} \lambda^{n/2-1}\varphi_\lambda'(t)+\underbrace{\e^{\i t\lambda}\mathfrak{a}_{1,t}(t\lambda)+\e^{-\i t\lambda}\mathfrak{a}_{2,t}(t\lambda)}_{\mathfrak{r}_t(\lambda)}\nonumber
\end{align}
with $\mathfrak{a}_{1,t}(\lambda), \mathfrak{a}_{2,t}(\lambda)\in S^{-1}$ uniformly in $0<t\leq 1$.
We claim that
\begin{align}\label{e5.6}
\|(1-\eta(t\sqrt{L}))\,m(\sqrt{L})\,\mathfrak{r}_t(\sqrt{L})\|_{L^1(G)\to L^1(G)}\leq C.
\end{align}
Indeed, note that on $\supp (1-\eta(t\cdot))$ we can rewrite 
\begin{align*}
\mathfrak{r}_t(\lambda)=\widetilde{\mathfrak{a}}_{1,t}(t\lambda)\cos(t\lambda)+{\widetilde{\mathfrak{a}}_{2,t}(t\lambda)\over t}{\sin(t\lambda)\over \lambda},
\end{align*}
where $\widetilde{\mathfrak{a}}_{1,t}(\lambda)\in S^{-1}$, $ \widetilde{\mathfrak{a}}_{2,t}(\lambda)\in S^0$ uniformly in $0<t\leq 1$. Following the proof of \cite[Theorem 8.1]{MuTh}, we take 
$\beta\in C^\infty_c(\R^n)$ such that it vanishes on $[-1, 1]$, and 
\begin{align*}
(1-\eta(t\lambda))\,m(\lambda)\,\widetilde{\mathfrak{a}}_{1,t}(t\lambda)=\sum_{\ell=-\log t}^\infty m_{\ell,t}(2^{-\ell} \lambda),
\end{align*}
where
\begin{align*}
m_{\ell,t}(\lambda):=\beta(\lambda)\,m(2^\ell\lambda)\,\widetilde{\mathfrak{a}}_{1,t}(2^\ell t\lambda).
\end{align*}
Note that for all $N=1,2,\dots$,
\begin{align*}
\|m_{\ell,t}\|_{C^N} \leq C_N\,t^{-1} 2^{-(n/2+1)\ell}
\end{align*}
uniformly in $\ell\geq -\log t$ and $0<t\leq 1$. From \cite[(8.4)]{MuTh}, we have
\begin{align*}
\|m_{\ell,t}(2^{-\ell}\sqrt{L})\,\cos(t\sqrt{L})\|_{L^1(G)\to L^1(G)}\leq C\,t^{-1} 2^{-(n/2+1)\ell} 2^{n\ell/2}=(2^\ell t)^{-1}.
\end{align*}
Hence
\begin{align*}
&\|(1-\eta(t\sqrt{L}))\,m(\sqrt{L})\,\widetilde{\mathfrak{a}}_{1,t}(t\sqrt{L})\,\cos(t\sqrt{L})\|_{L^1(G)\to L^1(G)}\\
\leq &\sum_{\ell=-\log t}^\infty \|m_{\ell,t}(2^{-\ell}\sqrt{L})\,\cos(t\sqrt{L})\|_{L^1(G)\to L^1(G)}\leq C.
\end{align*}
A similar argument as above yields 
\begin{align*}
\left\|(1-\eta(t\sqrt{L}))\,m(\sqrt{L}){\widetilde{\mathfrak{a}}_{2,t}(t\sqrt{L})\over t}{\sin(t\sqrt{L})\over \sqrt{L}}\right\|_{L^1(G)\to L^1(G)}\leq C.
\end{align*}
Thus \eqref{e5.6} is derived.

Now we consider the two terms $\lambda^{n/2}\varphi_\lambda(t)$ and $\lambda^{n/2-1}\varphi_\lambda'(t)$ in \eqref{e5.5}. If we set
\begin{align*}
\widetilde{\mathfrak{m}}_{1,t}(\lambda):=\varphi_\lambda(t)\,\mathfrak{m}_{1,t}(\lambda),\quad \widetilde{\mathfrak{m}}_{2,t}(\lambda):=\varphi'_\lambda(t)\,\mathfrak{m}_{2,t}(\lambda)
\end{align*}
with
\begin{align*}
\mathfrak{m}_{1,t}(\lambda):=(1-\eta(t\lambda))\, m(\lambda)\,\lambda^{n/2}\in S^0\quad \mbox{and}\quad \mathfrak{m}_{2,t}(\lambda):=(1-\eta(t\lambda))\, m(\lambda)\,\lambda^{n/2-1}\in S^{-1}
\end{align*}
uniformly in $0<t\leq 1$, in view of \eqref{e5.5}, the proof of \eqref{e5.4} reduces to showing that
\begin{align}\label{e5.7}
\|\widetilde{\mathfrak{m}}_{j,t}(\sqrt{L})\|_{H^1(G)\to L^1(G)}\leq C,\quad j=1,2.
\end{align} 

We now show \eqref{e5.7} for $j=1$. In view of Lemma \ref{lem2.6}, the operator $\mathfrak{m}_{1,t}(\sqrt{L})$ is bounded from $H^1(G)$ to $L^1(G)$ uniformly in $0<t\leq 1$. In addition, by \eqref{e4.1} and Lemma \ref{lem2.1}, we have
\begin{align*}
\|\varphi_{\sqrt{L}}(t)\|_{L^1(G)\to L^1(G)}\leq C\int_{\SS^n}\delta^{-1/2}(P(t,\omega))\,\d\omega\leq C.
\end{align*}
Therefore,
\begin{align*}
\|\widetilde{\mathfrak{m}}_{1,t}(\sqrt{L})\|_{H^1(G)\to L^1(G)}\leq \|\varphi_{\sqrt{L}}(t)\|_{L^1(G)\to L^1(G)}\|\mathfrak{m}_{1,t}(\sqrt{L})\|_{H^1(G)\to L^1(G)}\leq C,
\end{align*}
and \eqref{e5.7} holds for $j=1$.

To verify \eqref{e5.7} for $j=2$, we apply \eqref{e4.1} to write
\begin{align*}
\widetilde{\mathfrak{m}}_{2,t}(\sqrt{L})(f)(g)&=\partial_t\!\left[\varphi_{\sqrt{L}}(t)\,\mathfrak{m}_{2,t}(\sqrt{L})(f)(g)\right]\\
&={1\over\nu_n}\partial_t\!\left[\int_{\SS^n}\delta^{-1/2}(g(t,\omega))\,\mathfrak{m}_{2,t}(\sqrt{L})(f)(g\cdot g(t,\omega))\,\d\omega\right]\\
&={1\over\nu_n}\underbrace{\int_{\SS^n}\partial_t[\delta^{-1/2}(g(t,\omega))]\,\mathfrak{m}_{2,t}(\sqrt{L})(f)(g\cdot g(t,\omega))\,\d\omega}_{T_{1,t}(f)(g)}\\
&\quad +{1\over\nu_n}\underbrace{\int_{\SS^n}\delta^{-1/2}(g(t,\omega))\,(\partial_t\mathfrak{m}_{2,t})(\sqrt{L})(f)(g\cdot g(t,\omega))\,\d \omega}_{T_{2,t}(f)(g)}\\
&\quad +{1\over\nu_n}\underbrace{\int_{\SS^n}\delta^{-1/2}(g(t,\omega))\langle\nabla \mathfrak{m}_{2,t}(\sqrt{L})(f)(g\cdot g(t,\omega)),\partial_t[g\cdot g(t,\omega)]\rangle_{\mathbf{g}}\,\d \omega}_{T_{3,t}(f)(g)}.
\end{align*}
Now by Lemma \ref{lem2.1} and Lemma \ref{lem2.6},
\begin{align*}
\|T_{1,t}\|_{H^1(G)\to L^1(G)}\leq\int_{\SS^n}|\partial_t[\delta^{-1/2}(g(t,\omega))]|\,\d\omega\,\|\widetilde{\mathfrak{m}}_2(\sqrt{L})\|_{H^1(G)\to L^1(G)}\leq C.
\end{align*}
Also, recall that $\mathfrak{m}_{2,t}(\lambda)=(1-\eta(t\lambda))\, \mathfrak{m}_2(\lambda)$ with $\mathfrak{m}_2\in S^{-1}$, we have $\partial_t\mathfrak{m}_{2,t}\in S^0$ uniformly in $0<t\leq 1$. So by Lemma \ref{lem2.1} and Lemma \ref{lem2.6},
\begin{align*}
\|T_{2,t}\|_{H^1(G)\to L^1(G)}\leq\int_{\SS^n}\delta^{-1/2}(g(t,\omega))\,\d\omega\,\|(\partial_t\mathfrak{m}_{2,t})(\sqrt{L})\|_{H^1(G)\to L^1(G)}\leq C.
\end{align*}
Meanwhile, by Schwartz's inequality, Lemmas \ref{lem2.1} and \ref{thm3.2}, we can write
\begin{align*}
\|T_{3,t}(f)\|_{L^1(G)}\leq \int_{\SS^n}\delta^{-1/2}(g(t,\omega))\,\d \omega\,\||\nabla\mathfrak{m}_{2,t}(\sqrt{L})(f)|_{\mathbf{g}}\|_{L^1(G)}\leq C\,\|f\|_{H^1(G)}.
\end{align*}
Combining these estimates together we obtain
\begin{align*}
\|\widetilde{\mathfrak{m}}_{2,t}(\sqrt{L})\|_{H^1(G)\to L^1(G)}\leq C,
\end{align*}
and estimate \eqref{e5.7} holds for $j=2$. From \eqref{e5.7} we conclude the proof of (a) for $0<t\leq 1$ in \textbf{Case 2}. This ends the proof of (a).

The proof of (b) can be obtained by a similar argument, and we skip the detail here. The proof of Proposition~\ref{prop5.2} is concluded.
\end{proof}

Next we apply the $H^1$--$L^1$-boundedness of wave propagators in Proposition~\ref{prop5.2} to show the following result.
\begin{proposition}\label{prop5.3}
Let $t\in\R$ and $1<p<\infty$. Suppose $m_1\in S^{-n|1/p-1/2|}$ and $m_2\in S^{-n|1/p-1/2|-1}$ are two even symbols. Then we have
\begin{align*}
\|m_1(\sqrt{L})\,\cos(t\sqrt{L})\|_{L^p(G)\to L^p(G)}\leq C\,(1+|t|)^{2|1/p-1/2|}
\end{align*}
and
\begin{align*}
\|m_2(\sqrt{L})\,\sin(t\sqrt{L})/\sqrt{L}\|_{L^p(G)\to L^p(G)}\leq C\,(1+|t|).
\end{align*}
\end{proposition}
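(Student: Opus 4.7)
The plan is to derive both estimates by Stein's complex interpolation (Lemma~\ref{lem2.7}), pairing the $H^1\to L^1$ endpoints from Proposition~\ref{prop5.2} with elementary $L^2\to L^2$ endpoints from the spectral theorem. Because $L$ is self-adjoint on $L^2(G,\d\rho)$, the adjoint of $m(\sqrt L)\cos(t\sqrt L)$ is $\bar m(\sqrt L)\cos(t\sqrt L)$ (and analogously for the sine operator) with $\bar m$ in the same symbol class, so by duality it suffices to establish the bounds for $1<p\leq 2$. The case $p=2$ is immediate: $\|m_1\|_\infty<\infty$ together with the unitarity of $\cos(t\sqrt L)$ handles the cosine, while $\|m_2\|_\infty<\infty$ and $|\sin(t\lambda)/\lambda|\leq|t|$ handle the sine. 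From now on fix $1<p<2$, set $\beta:=1/p-1/2\in(0,1/2)$, and $\vartheta:=2-2/p\in(0,1)$, so that Lemma~\ref{lem2.7} applied at $\zeta=\vartheta$ produces the desired $L^p$ bound.

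For the cosine, factor $m_1(\lambda)=M(\lambda)(1+\lambda^2)^{-n\beta/2}$ with $M\in S^0$, and introduce the analytic family
\begin{align*}
T_\zeta\,:=\,\e^{\zeta^2}\,M(\sqrt L)\,(\Id+L)^{-(n/4)(1-\zeta)}\,\cos(t\sqrt L),\qquad \zeta\in\Sigma.
\end{align*}
On $\Re\zeta=0$ the spectral multiplier $M(\lambda)(1+\lambda^2)^{-(n/4)(1-\zeta)}$ lies in $S^{-n/2}$ with symbol semi-norms of at most polynomial growth in $|\Im\zeta|$ (a standard estimate for complex powers of $\Id+L$), so Proposition~\ref{prop5.2}(a) and the Gaussian factor $|\e^{\zeta^2}|=\e^{(\Re\zeta)^2-(\Im\zeta)^2}$ yield $\|T_\zeta\|_{H^1\to L^1}\leq C(1+|t|)$. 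On $\Re\zeta=1$ the spectral multiplier is $L^\infty$-bounded (modulo a polynomial factor again absorbed by $\e^{\zeta^2}$) and $\cos(t\sqrt L)$ is unitary on $L^2$, giving $\|T_\zeta\|_{L^2\to L^2}\leq C$. Since $T_\vartheta=\e^{\vartheta^2}m_1(\sqrt L)\cos(t\sqrt L)$, Lemma~\ref{lem2.7} produces $\|m_1(\sqrt L)\cos(t\sqrt L)\|_{L^p\to L^p}\leq C(1+|t|)^{1-\vartheta}=C(1+|t|)^{2|1/p-1/2|}$.

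For the sine bound, factor $m_2(\lambda)=M(\lambda)(1+\lambda^2)^{-(n-2)\beta/2}$ with $M\in S^{-2\beta-1}\subset S^0$ (possible because $m_2\in S^{-n\beta-1}\subset S^{-(n-2)\beta}$), and take
\begin{align*}
T_\zeta\,:=\,\e^{\zeta^2}\,M(\sqrt L)\,(\Id+L)^{-(n/2-1)(1-\zeta)/2}\,\frac{\sin(t\sqrt L)}{\sqrt L}.
\end{align*}
On $\Re\zeta=0$ the spectral multiplier sits in $S^{1-n/2}$ and Proposition~\ref{prop5.2}(b) gives $\|T_\zeta\|_{H^1\to L^1}\leq C(1+|t|)$; on $\Re\zeta=1$ the multiplier is bounded and the elementary estimate $|\sin(t\lambda)/\lambda|\leq|t|$ yields $\|T_\zeta\|_{L^2\to L^2}\leq C|t|$. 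Lemma~\ref{lem2.7} then produces $\|T_\vartheta\|_{L^p\to L^p}\leq C(1+|t|)^{1-\vartheta}|t|^{\vartheta}$, which is $\leq C(1+|t|)$ by inspecting $|t|\leq 1$ and $|t|>1$ separately, and $T_\vartheta=\e^{\vartheta^2}m_2(\sqrt L)\sin(t\sqrt L)/\sqrt L$ by construction.

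The main technical point is the bookkeeping for Lemma~\ref{lem2.7}: verifying analyticity and continuity of $\zeta\mapsto T_\zeta$ on $\Sigma$ (routine via the spectral theorem and analyticity of $\zeta\mapsto(\Id+L)^{\zeta}$ on $L^2$), and tracking the polynomial $|\Im\zeta|$-dependence of the $S^{-n/2}$ and $S^{1-n/2}$ semi-norms of the complex imaginary-power multiplier so that the Gaussian $\e^{\zeta^2}$ truly renders the endpoint bounds uniform. Once these standard verifications are in place, the whole argument rests on Proposition~\ref{prop5.2}, the spectral theorem, and duality.
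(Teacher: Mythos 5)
Your argument is correct and follows essentially the same route as the paper: both set up an analytic family by multiplying the given symbol by a complex power of $(\Id+L)$, invoke Proposition~\ref{prop5.2} at $\Re\zeta=0$, the spectral theorem at $\Re\zeta=1$, and conclude via Lemma~\ref{lem2.7}, after reducing to $1<p\leq 2$ by duality. The only cosmetic differences are that you center the Gaussian damping at $\zeta=0$ rather than at the interpolation point $2-2/p$ (so $T_\vartheta$ picks up the harmless constant $\e^{\vartheta^2}$), and you spell out the factorization $m_j=M\cdot(1+\lambda^2)^{\cdots}$ and the polynomial $|\Im\zeta|$-growth of the imaginary-power multiplier, both of which the paper leaves implicit.
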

\begin{proof}
By duality, it suffices to assume $1<p\leq 2$. We set
\begin{align*}
m^\zeta_1(\lambda):=\e^{(\zeta-2+2/p)^2}m_1(\lambda)\,(1+\lambda^2)^{n|1/p-1/2|/2-n(1-\zeta)/4}
\end{align*}
and
\begin{align*}
m^\zeta_2(\lambda):=\e^{(\zeta-2+2/p)^2}m_2(\lambda)\,(1+\lambda^2)^{n|1/p-1/2|/2-1-(n-2)(1-\zeta)/4}.
\end{align*}
Note that $m^\zeta_1\in S^{-n(1-\Re \zeta)/2}$ and $m^\zeta_2\in S^{-(n-2)(1-\Re \zeta)/2}$ uniformly in $\zeta\in \{z\in\mathbb{C}:0\leq \Re z\leq 1\}$. Define the analytic families of operators
\begin{align*}
T^\zeta_{1,t}:=m^\zeta_1(\sqrt{L})\,\cos(t\sqrt{L})\quad \mbox{and}\quad T^\zeta_{2,t}:=m^\zeta_2(\sqrt{L}){\sin(t\sqrt{L})\over\sqrt{L}}.
\end{align*}
In view of Proposition \ref{prop5.2},
\begin{align*}
\|T^\zeta_{j,t}\|_{H^1(G)\to L^1(G)}\leq C\,(1+|t|) \quad\mbox{for } j=1,2,\ \Re\zeta=0,
\end{align*}
while by the spectral theorem
\begin{align*}
\|T^\zeta_{1,t}\|_{L^2(G)\to L^2(G)}\leq C\mbox{ and } \|T^\zeta_{2,t}\|_{L^2(G)\to L^2(G)}\leq C\,|t|\quad \mbox{for } \Re\zeta=1.
\end{align*}
Therefore, by Lemma \ref{lem2.7}, we have
\begin{align*}
\|T^{2(1-1/p)}_{1,t}\|_{L^p(G)\to L^p(G)}\leq C\,(1+|t|)^{2(1/p-1/2)} \mbox{ and } \|T^{2(1-1/p)}_{2,t}\|_{L^2(G)\to L^2(G)}\leq C\,(1+|t|).
\end{align*}
But $T^{2(1-1/p)}_{1,t}=m_1(\sqrt{L})\,\cos(t\sqrt{L})$ and $T^{2(1-1/p)}_{2,t}=m_2(\sqrt{L})\,\sin(t\sqrt{L})/\sqrt{L}$, the desired result follows.
\end{proof}
Finally we show Theorem \ref{thm5.1}.
\begin{proof}[Proof of Theorem \ref{thm5.1}]
This is a direct consequence of Propositions \ref{prop5.2} and \ref{prop5.3}.
\end{proof}

\medskip


\section{Proof of Theorem \ref{thm1.1}: the necessity part}\label{sec6}
In this section we show the necessity part of Theorem \ref{thm1.1}. 
In the following, we fix a $t>0$, and assume that $g=0$ since the proof can be obtained 
by a similar argument when $g\not=0$, and we skip the detail here. 

We consider three cases:

\medskip

\noindent
{\bf Case 1: $p=2$}. In this case, the necessary condition $\alpha_0\geq 0$ of Theorem \ref{thm1.1} follows 
from the spectral theorem. 

\medskip

\noindent
{\bf Case 2: $p=4n$}. In this case, we will show that $\alpha_0\geq 
n|1/p-1/2|=n/2-1/4$. To show it, we let $\eta\in C^\infty(\R)$ be an even function such that $0\leq \eta \leq 1$, $\eta\equiv 0$ on $[-1,1]$ and $\eta\equiv 1$ on $(-\infty,-2]\cup [2,\infty)$. It follows from \cite[Theorem 8.1]{MuTh} that for all $1\leq q\leq \infty$ and all $\alpha_0\in\R$, the operator $\cos (t\sqrt{L})\,(1-\eta(\sqrt{L}))\,(\Id+ L)^{-\alpha_0/2}$ is bounded on $L^q(G)$. To consider the operator $\cos (t\sqrt{L})\,\eta(\sqrt{L})\,(\Id+ L)^{-\alpha_0/2}$, it follows from Lemma \ref{lem2.6} 
that its $L^q$-boundedness is equivalent to that of the operator $\cos (t\sqrt{L})\,\eta(\sqrt{L})\,m(\sqrt{L})$ whenever $m\in S^{-\alpha_0}$ with $m(\lambda)\sim \lambda$ as $\lambda\to \infty$. So, it reduces to showing that if the operator $\cos (t\sqrt{L})\,\eta(\sqrt{L})\,m(\sqrt{L})$ is bounded on $L^{4n}(G)$ for some $m$ as above, then we must have $\alpha_0\geq n/2-1/4$.

To show it, we construct a family of functions $\{f^{(\varepsilon)}_t\}$ as follows:
\begin{align*}
f^{(\varepsilon)}_t(x,y):=\delta^{1/2}(x,y)\,\indicator_{E^{(\varepsilon)}_t}(x,y),
\end{align*}
where $E^{(\varepsilon)}_t:=\{(x,y)\in G:|R(x,y)-t-2\varepsilon|<\varepsilon\}$ for $\varepsilon$ sufficiently small. Since the modular function $\delta$ is a continuous function, obviously
\begin{align}\label{e5.0}
\|f^{(\varepsilon)}_t\|_{L^{4n}(G)}\leq C(t)\,\varepsilon^{1/(4n)}.
\end{align}

Next we need to estimate 
\begin{align}\label{e6.00001}
\|\cos (t\sqrt{L})\,\eta(\sqrt{L})\,m(\sqrt{L})(f^{(\varepsilon)}_t)\|_{L^{4n}(G)}.
\end{align}
We denote by $\kappa_t$ the convolution kernel of $\cos (t\sqrt{\Delta_n})\,\eta(\sqrt{\Delta_n})\,m(\sqrt{\Delta_n})$, where $\Delta_n$ is the shifted Laplace-Beltrami operator defined in Subsection \ref{sec3}. In view of Lemma \ref{lem3.1} and Proposition \ref{prop4.1} (a), we have
\begin{align}\label{e6.0001}
&\kappa_t(x,y)\\
=&\int_0^\infty\cos(t\lambda)\,\eta(\lambda)\, m(\lambda)\,\varphi_\lambda(R(x,y))\,|\bc(\lambda)|^{-2}\,\d\lambda \nonumber\\
=&C_n\underbrace{(\sinh R(x,y))^{-n/2}\int_0^\infty\cos(t\lambda)\,\eta(\lambda)\, m(\lambda)\,\lambda^{-n/2} \cos(\lambda R(x,y)-n\uppi/4)\,|\bc(\lambda)|^{-2}\,\d\lambda}_{\widetilde{\kappa}_t(x,y)}\nonumber\\
&+\underbrace{ \int_0^\infty\cos(t\lambda)\,\eta(\lambda)\, m(\lambda)\, \mathfrak{r}_{R(x,y)}(\lambda)\,|\bc(\lambda)|^{-2}\,\d\lambda}_{\mathfrak{R}_t(x,y)}, \nonumber 
\end{align}
where $\bc$ is the Harish-Chandra function given by \eqref{e2.11} and $\mathfrak{r}_t(\lambda)$ is as in Proposition \ref{prop4.1} (a). By \eqref{e2.11} and the basic properties of gamma function \cite[(2), (5) and (7) on p. 3]{Er}, we have that $|\bc(\lambda)|^{-2}\sim \lambda^n$ as $\lambda\to\infty$. We set $m(\lambda):=|\bc(\lambda)|^2\lambda^n\lambda^{-\alpha_0}$.

We first estimate $\widetilde{\kappa}_t(x,y)$ in \eqref{e6.0001}. Note that
\begin{align}\label{e6.01}
\widetilde{\kappa}_t(x,y)=(\sinh R(x,y))^{-n/2}\int_0^\infty\cos(t\lambda)\,\eta(\lambda)\, \lambda^{n/2-\alpha_0} \cos(\lambda R(x,y)-n\uppi/4)\,\d\lambda.
\end{align}
In view of \cite[Theorem 2.4.6]{Gr2}, for $R>0$ and $\gamma\ne 2K+1$ for all $K\in\mathbb{Z}$,
\begin{align}\label{e6.001}
\int_0^\infty \cos(\lambda R)\,\lambda^\gamma\,\d\lambda
={\Gamma((\gamma+1)/2)\over \Gamma(-\gamma/2)}2^{\gamma}\sqrt{\uppi} R^{-1-\gamma}.
\end{align}
Taking derivatives with respect to $R$ on both sides of \eqref{e6.001} and then replacing $\gamma +1$ by $\gamma$, in view of \cite[(6) and (7) on p. 3]{Er} we obatin that for $\gamma\ne 2K$ for all $K\in\mathbb{Z}$,
\begin{align*}
\int_0^\infty \sin(\lambda R)\,\lambda^{\gamma}\,\d\lambda
=-{\Gamma((\gamma+1)/2)\over \Gamma(-\gamma/2)}\cot\!\left({\uppi\gamma\over 2}\right)2^\gamma\sqrt{\uppi} \,R^{-1-\gamma}.
\end{align*}
Since
\begin{align*}
\cos(\lambda R -n\uppi/4)\,\cos(t\lambda)={1\over 2}\big(\cos((r-t)\lambda-n\uppi/4)+\cos((r+t)\lambda-n\uppi/4)\big),
\end{align*}
for all $\gamma\in (1/8,3/8)$ we have
\begin{align*}
\int_0^\infty \cos(t\lambda) \, \lambda^\gamma \cos(\lambda R -n\uppi/4)\,\d \lambda=C_\gamma\, \big((r-t)^{-\gamma-1}+(r+t)^{-\gamma-1}\big)
\end{align*}
with $C_\gamma\ne 0$ and $(r+t)^{-\gamma-1}\leq C$. In addition, for $\gamma$ as above, it is obvious that
\begin{align*}
\int_0^\infty \cos(t\lambda) \,(1-\eta(\lambda))\, \lambda^\gamma \cos(\lambda R -n\uppi/4)\,\d \lambda\leq C.
\end{align*}
The above estimates show that for all $\gamma\in (1/8,3/8)$, 
\begin{align}\label{e6.1}
\int_0^\infty \cos(t\lambda) \,\eta(\lambda)\, \lambda^\gamma\cos(\lambda r-n\uppi/4)\,\d \lambda= C_\gamma \,(r-t)^{-\gamma-1}+O(1)
\end{align} 
with $C_\gamma\ne 0$, as $r\to t^+$. We substitute \eqref{e6.1} into \eqref{e6.01} to obtain that for $n/2-3/8<\alpha_0<n/2-1/8,$
\begin{align}\label{e6.2}
\widetilde{\kappa}_t(x,y)=(\sinh R(x,y))^{-n/2}\big(C_{\alpha_0}\, (R(x,y)-t)^{\alpha_0-n/2-1}+O(1)\big)
\end{align}
with $C_{\alpha_0}\ne 0$, as $R(x,y)\to t^+$. 

Now we estimate $\mathfrak{R}_t(x,y)$ in \eqref{e6.0001}. For $n/2-3/8<\alpha_0<n/2-1/8$, we apply Lemma \ref{lem3.4} to obtain that
\begin{align}\label{e6.3}
|\mathfrak{R}_t(x,y)|\leq C(R(x,y))\, (R(x,y)-t)^{\alpha_0-n/2},
\end{align}
as $R(x,y)\to t^+$, where $C(\cdot)$ is some continuous function.

Now we apply \eqref{e6.2} and \eqref{e6.3} to estimate $\|\cos (t\sqrt{L})\,\eta(\sqrt{L})\,m(\sqrt{L})(f^{(\varepsilon)}_t)\|_{L^{4n}(G)}$ in \eqref{e6.00001}. Suppose $(x,y)\in B_{\varepsilon/100}(e)$. Then for $(x', y')\in \supp f^{(\varepsilon)}_t=E^{(\varepsilon)}_t$, $d((x,y), (x', y'))-t\sim \varepsilon$. It follows from \eqref{e3.3}, \eqref{e6.2} and \eqref{e6.3} that for $n/2-3/8<\alpha_0<n/2-1/8$, we have
\begin{align*}
&\hspace{-0.6cm}|\cos (t\sqrt{L})\,\eta(\sqrt{L})\,m(\sqrt{L})(f^{(\varepsilon)}_t)(x,y)|\\
=&\delta^{1/2}(x,y)\,|\cos (t\sqrt{\Delta_n})\,\eta(\sqrt{\Delta_n})\,m(\sqrt{\Delta_n})(\indicator_{E^{(\varepsilon)}_t})(x,y)|\\
\geq &\delta^{1/2}(x,y)\Big(|C_n|\,|\indicator_{E^{(\varepsilon)}_t}*\widetilde{\kappa}_t(x,y)|-|\indicator_{E^{(\varepsilon)}_t}*\mathfrak{R}_t(x,y)|\Big)\\
\geq& \widetilde{C}(t) \int_{t+c_1\varepsilon}^{t+c_2\varepsilon}(r-t)^{\alpha_0-n/2-1}\,\d r-C'(t) \int_{t+c'_1\varepsilon}^{t+c'_2\varepsilon}\big((r-t)^{\alpha_0-n/2}+O(1)\big)\,\d r\\
\geq& C''(t)\, \varepsilon^{\alpha_0-n/2},
\end{align*}
which implies
\begin{align*}
\|\cos (t\sqrt{L})\,\eta(\sqrt{L})\,m(\sqrt{L})(f^{(\varepsilon)}_t)\|_{L^{4n}(G)}&\geq\|\cos (t\sqrt{L})\,\eta(\sqrt{L})\,m(\sqrt{L})(f^{(\varepsilon)}_t)\|_{L^{4n}(B_{\varepsilon/100}(e))}\\
&\geq C''(t)\, \varepsilon^{\alpha_0-n/2} \varepsilon^{(n+1)/(4n)}.
\end{align*}
Combining this estimate with \eqref{e5.0}, for $\alpha_0$ as above, if the operator $\cos (t\sqrt{L})\,\eta(\sqrt{L})\,m(\sqrt{L})$ is bounded on $L^{4n}(G)$, then we must have
\begin{align*}
\varepsilon^{\alpha_0-n/2} \varepsilon^{(n+1)/(4n)}\leq \varepsilon^{1/(4n)},
\end{align*}
which is exactly $n/2-1/8>\alpha_0\geq n/2-1/4$.
On the other hand, if $\alpha_0\geq n/2-1/8$, then 
$\alpha_0\geq n/2-1/4$ holds.
This finishes the proof of {\bf Case 2} for $p=4n$.

\medskip

\noindent
{\bf Case 3: $1\leq p<\infty$}. From {\bf Case 2}, we see that by duality, the condition $\alpha_0\geq n/2-1/4$ is also necessary for $\cos(t\sqrt{L})\,(\Id+L)^{-\alpha_0/2}$ to be bounded on $L^{4n/(4n-1)}(G)$. Now we show the condition $\alpha\geq n/2$ is necessary for $\cos (t\sqrt{L})\,(\Id+ L)^{-\alpha_0/2}$ to be bounded from $H^1(G)$ to $L^1(G)$, which is (ii). If this is not the case, the interpolation argument in the proof of Proposition \ref{prop5.2} and the necessity on $L^2$-boundedness in \textbf{Case 1} would yield a contradiction to the necessity on $L^{4n/(4n-1)}$-boundedness. 

Similarly, the necessity on $L^p$-boundedness ($1<p<2$) follows from that on $H^1$--$L^1$-, $L^{4n/(4n-1)}$--$L^{4n/(4n-1)}$- and $L^2$--$L^2$-boundedness, and the necessity on $L^p$-boundedness $(2<p<\infty)$ follows from duality. This finishes the proof of {\bf Case 3} for $1\leq p<\infty$, and then the proof of the necessity part of Theorem \ref{thm1.1} is completed.

\medskip


{
\appendix
\section{Proof of Lemma \ref{thm3.2}}\label{appa}

In this appendix, we give the proof of Lemma \ref{thm3.2}.

Let $X, Y_j\ (j=1,2,\dots, n)$ be the vector fields given by \eqref{e1.2}, $K_0(g,h)$ be the Schwartz kernel of $Xm(\sqrt{L})$ and $K_j(g,h)$ be the Schwartz kernel of $Y_jm(\sqrt{L})$ with respect to $\d \rho$. We have the following result.
\begin{lemma}\label{lem3.5}
Suppose $m\in S^{-1}$ is an even function. Then for all $j=0,1,\dots, n$, $K_j$ coincides with a function off the diagonal. In particular, for all $N=1,2,\dots$,
\begin{align*}
|(\nabla K_j(g,\cdot))(h)|_{\mathbf{g}}\leq C_N\, \delta^{1/2}(gh)\,\varrho_N(d(g,h)),
\end{align*}
where
\begin{align}\label{erho}
\varrho_N(r)=\begin{cases}
r^{-n-2}&\mbox{if }0<r\leq 1,\\
\e^{-nr/2}r^{-N}&\mbox{if }r>1.
\end{cases}
\end{align}
\end{lemma}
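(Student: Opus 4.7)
The plan is to combine the explicit kernel formula \eqref{e3.5} with the chain rule for derivatives on $G$, and then estimate the resulting oscillatory integrals via Lemmas~\ref{lem3.3} and \ref{lem3.4}.

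First, setting $Y_0 := X$, by left-invariance the Schwartz kernel of $Y_j m(\sqrt{L})$ with respect to $\d\rho$ is $K_j(g,h) = \delta(h)(Y_j k_m)(h^{-1}g)$. Writing $k_m(u) = \delta^{1/2}(u)\tilde{k}_m(u)$ with $\tilde{k}_m(u) := \int_\R m(\lambda)F_{R(u)}(\lambda)\lambda\,\d\lambda$, using $Y_j\delta^{1/2} = 0$ for $j \geq 1$ and $X\delta^{1/2} = -(n/2)\delta^{1/2}$, together with the homomorphism identity $\delta(h)\delta^{1/2}(h^{-1}g) = \delta^{1/2}(gh)$, we obtain
\begin{align*}
K_j(g,h) = \delta^{1/2}(gh)\,\big[(Y_jR)(h^{-1}g)\,\mathcal{I}^{(1)}(d(g,h)) + c_j\,\mathcal{I}^{(0)}(d(g,h))\big],
\end{align*}
where $\mathcal{I}^{(i)}(r) := \int_\R m(\lambda)F^{(i)}_r(\lambda)\lambda\,\d\lambda$, $c_0 = -n/2$, $c_j = 0$ for $j \geq 1$, and $Y_0 R := XR$. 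The continuity of the integrand off the diagonal (to be justified by Lemma~\ref{lem3.4} below) shows in particular that $K_j$ coincides with a function away from $\{g = h\}$.

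Second, I apply $\nabla_h$ using orthonormality of the frame $\{X,Y_1,\dots,Y_n\}$ to compute $|\nabla_h K_j|_{\mathbf{g}}^2 = (X_h K_j)^2 + \sum_i(Y_{i,h}K_j)^2$. The derivative of $\delta^{1/2}(gh)$ contributes a bounded multiplicative $O(1)$ factor. The derivative of each $\mathcal{I}^{(i)}(R(h^{-1}g))$ yields $\mathcal{I}^{(i+1)}(d(g,h))\,\nabla_hR(h^{-1}g)$, whose Riemannian norm equals $|\mathcal{I}^{(i+1)}(d(g,h))|$ by the identity $|\nabla_h R(h^{-1}g)|_{\mathbf{g}} = 1$. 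The derivative of $(Y_jR)(h^{-1}g)$ produces a term whose Riemannian norm can be controlled via the explicit first and second derivatives of $Y_jR = y_j/\sinh R$ on the hyperbolic space: bounded for $R \geq 1$ and at worst $O(1/R)$ as $R \to 0^+$. The key point, as in \cite{SjVa}, is that although the individual frame components $X_h K_j$ and $Y_{i,h}K_j$ involve a priori unbounded chain-rule factors $1/x_{h^{-1}g}$ and $y_{p,k}/x_{h^{-1}g}$, these combine in the Riemannian norm into invariant quantities via $|\nabla_h R(h^{-1}g)|_{\mathbf{g}} = 1$. Collecting,
\begin{align*}
|(\nabla_h K_j(g,\cdot))(h)|_{\mathbf{g}} \leq C\,\delta^{1/2}(gh)\sum_{i=0}^{2} B_i(d(g,h))\,|\mathcal{I}^{(i)}(d(g,h))|,
\end{align*}
with $B_i$ bounded for $r > 1$ and for $0 < r \leq 1$ bounded except at worst $B_1 = O(1/r)$.

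Third, estimate $\mathcal{I}^{(i)}(r)$. For $r > 1$, Lemma~\ref{lem3.3}(a) writes $F^{(i)}_r(\lambda) = \e^{\i\lambda r}\e^{-nr/2}b_{n/2-1+i}(\lambda,r)$, so (since $m \in S^{-1}$) the integrand of $\mathcal{I}^{(i)}$ is $\e^{\i\lambda r}\e^{-nr/2}$ times an $S^{n/2+i}$-symbol, and Lemma~\ref{lem3.4} yields $|\mathcal{I}^{(i)}(r)| \leq C_N\,\e^{-nr/2}r^{-N}$ for every $N$. For $0 < r \leq 1$, Lemma~\ref{lem3.3}(b) decomposes $F^{(i)}_r(\lambda)$ into $\e^{\i\lambda r}$ times two explicit $r$-prefactor pieces; Lemma~\ref{lem3.4} applied termwise produces $|\mathcal{I}^{(i)}(r)| \leq C\,r^{-n-i}$ for $i = 0, 1, 2$. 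Combined with the coefficient bounds from step two, the worst contribution to $|(\nabla K_j(g,\cdot))(h)|_{\mathbf{g}}$ near $r = 0$ is either $(1/r)\cdot r^{-n-1} = r^{-n-2}$ (from $B_1\mathcal{I}^{(1)}$) or $1\cdot r^{-n-2}$ (from $B_2\mathcal{I}^{(2)}$), both matching the claimed $\varrho_N(r) = r^{-n-2}$, while for $r > 1$ every term is bounded by $C_N\e^{-nr/2}r^{-N}$.

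The main technical obstacle is the bookkeeping in step two: one must verify that, although individual derivatives of $F(h^{-1}g)$ in $h$ bring in a priori unbounded factors $1/x_{h^{-1}g}$ and $y_{p,k}/x_{h^{-1}g}$, the Riemannian norm $|\nabla_h[F(h^{-1}g)]|_{\mathbf{g}}$ is controlled by $|\nabla F|_{\mathbf{g}}(h^{-1}g)$ (up to the $O(1/R)$ Hessian correction near the origin). This hinges on the unit-length identity $|\nabla_h R(h^{-1}g)|_{\mathbf{g}} = 1$ and on the explicit Hessian of the hyperbolic distance, and it is exactly the cancellation exploited in the Riesz-transform argument of \cite{SjVa}.
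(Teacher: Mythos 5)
Your proof follows the paper's route: represent $K_j$ via \eqref{e3.4}--\eqref{e3.5}, differentiate in $h$, and control the resulting oscillatory integrals $\kappa_m^{(i)}(r)$ via Lemmas~\ref{lem3.3} and \ref{lem3.4}. You are in fact a bit more careful than the paper on one point: the paper asserts that the second-order derivatives of $d(g,h)$ (such as $Y_jY_\ell d$) are bounded off the diagonal, which is not accurate since the Hessian of the distance function behaves like $1/d(g,h)$ near the diagonal --- precisely the $O(1/r)$ coefficient you track --- and, as you observe, this extra singularity is harmless because $r^{-1}|\kappa_m'(r)| = O(r^{-n-2})$ matches the bound already coming from $|\kappa_m''(r)|$, so the claimed estimate $\varrho_N$ is unaffected.
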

\begin{proof}
Note that
\begin{align*}
m(\sqrt{L})(f)(g)=\int_Gf(h)\,k_m(h^{-1}g)\,\delta(g)\,\d\rho(g).
\end{align*}
So by \eqref{mm}, \eqref{con} and \eqref{e3.4}, for $j=1,2,\dots,n$,
\begin{align*}
K_0(g,h)&=\delta(h)\,X[k_m(h^{-1}(\cdot))](g)\\
&=\delta^{1/2}(h)\,X[(\delta(\cdot))^{1/2}\kappa_m(R(h^{-1}(\cdot)))](g)\\
&=\delta^{1/2}(gh)\left(-{n\over 2}\kappa_m(d(g,h))+\kappa_m'(d(g,h))\,X[d(\cdot,h)](g)\right),
\end{align*}
\begin{align*}
K_i(g,h)=\delta^{1/2}(gh)\,\kappa_m'(d(g,h))\,Y_j[d(\cdot,h)](g).
\end{align*}
In addition, as can be easily verified, for $j,\ell=1,2,\dots,n$ and $g\ne h$, $Xd(g,h)$, $Y_jd(g,h)$, $XXd(g,h)$, $XY_jd(g,h)$, $Y_jXd(g,h)$ and $Y_jY_\ell d(g,h)$ are all bounded. Therefore for $j=0,1,\dots, n$,
\begin{align}
|(\nabla K_j(g,\cdot))(h)|_{\mathbf{g}}&\leq |(X K_j(g,\cdot))(h)|+\sum_{\ell=1}^n|(V_\ell K_j(g,\cdot))(h)|\label{e3.12}\\
&\leq C\,\delta^{1/2}(gh)\,(|\kappa_m(d(g,h))|+|\kappa_m'(d(g,h))|+|\kappa_m''(d(g,h))|).\notag
\end{align}
Note that by  \eqref{e3.4}, \eqref{e3.5}, Lemmas \ref{lem3.3} and \ref{lem3.4}, for $r>1$ and $N=1,2,\dots$,
\begin{align*}
|\kappa_m''(r)|&=C\left|\int_\R m(\lambda)\,F^{(2)}_r(\lambda)\,\lambda\,\d \lambda\right|\\
&=C\,\e^{-nr/2}\left|\int_\R m(\lambda)\, \e^{\i\lambda r}b_{n/2+1}(\lambda,r)\,\lambda\,\d \lambda\right|\leq C_N\,r^{-N} \e^{-nr/2},
\end{align*}
and similarly, $|\kappa_m'(r)|\leq C_N\,r^{-N} \e^{-nr/2}$, $|\kappa_m(r)|\leq C_N\,r^{-N} \e^{-nr/2}$, while for $0<r\leq 1$,
\begin{align*}
|\kappa_m''(r)|&\leq C\left(r^{-n/2+1}\left|\int_\R m(\lambda)\,\e^{\i\lambda r}b_{n/2+2}(\lambda,r)\,\lambda\,\d\lambda\right|+r^{-n-1}\left|\int_\R m(\lambda)\,\e^{\i\lambda r}b_0(\lambda,r)\,\lambda\,\d\lambda\right|\right)\\
&\leq C\,r^{-n-2},
\end{align*}
and similarly, $|\kappa_m'(r)|\leq C\,r^{-n-1}$, $|\kappa_m(r)|\leq C\,r^{-n}$. Substituting these estimates into \eqref{e3.12} gives the desired estimates.
\end{proof}
Now we are in a position to show Lemma \ref{thm3.2}.
\begin{proof}[Proof of Lemma \ref{thm3.2}]
Note that 
\begin{align}\label{e3.13}
|\nabla m(\sqrt{L})(f)|_{\mathbf{g}}\sim |X m(\sqrt{L})(f)|+\sum_{j=1}^n|Y_j m(\sqrt{L})(f)|.
\end{align}
Therefore, it suffices to show the operators $X m(\sqrt{L})$ and $Y_j m(\sqrt{L})\ (j=1,2,\dots,n)$ are bounded from $H^1(G)$ to $L^1(G)$.

It follows from \cite[Theorem 6.4]{HeSt} that the Riesz transform $\nabla L^{-1/2}$ is bounded from $L^2(G,|\cdot|_{\mathbf{g}})$ to $L^2(G)$. Thus by the spectral theorem
\begin{align*}
\||\nabla m(\sqrt{L})(f)|_{\mathbf{g}}\|_{L^2(G)}&=\||(\nabla L^{-1/2})\sqrt{L}m(\sqrt{L})(f)|_{\mathbf{g}}\|_{L^2(G)}\\
&\leq C\,\|\sqrt{L}m(\sqrt{L})(f)\|_{L^2(G)}\leq C\,\|f\|_{L^2(G)}.
\end{align*}
By \eqref{e3.13} and Lemma \ref{lem2.5}, it suffices to show that for each $j=0,1\dots, n$,
\begin{align}\label{e3.14}
\sup_{R\in\mathscr{R}}\sup_{h,h'\in R}\int_{(R^*)^c}|K_j(g,h)-K_j(g,h')|\,\d \rho(g)<\infty.
\end{align}

For fixed $j=0,1,\dots, n$, $R\in\mathscr{R}$ and $h,h'\in R$, by Lemma \ref{lem3.5} we have
\begin{align*}
&\int_{(R^*)^c}|K_j(g,h)-K_j(g,h')|\,\d \rho(g)\\
=&\int_{(R^*)^c}\left|\int_0^{d(h,h')}\partial_{\vartheta}[K_j(g,\gamma_{h,h'}(\vartheta))]\,\d\vartheta\right|\d \rho(g)\\
\leq &\int_0^{d(h,h')}\left(\int_{(R^*)^c}|\langle (\nabla K_i(g,\cdot))(\gamma_{h,h'}(\vartheta)),\dot{\gamma}_{h,h'}(\vartheta)\rangle_{\mathbf{g}}|\,\d \rho(g)\right)\d\vartheta\\
\leq& d(h,h') \sup_{\widetilde{h}\in R}\int_{(R^*)^c}|(\nabla K_i(g,\cdot))(\widetilde{h})|_{\mathbf{g}}\,\d \rho(g)\\
\leq & C_N\,r_R\sup_{\widetilde{h}\in R}\int_{(B(\widetilde{h},r_R))^c}\delta^{1/2}(g\widetilde{h})\,\varrho_N(d(g,\widetilde{h}))\,\d \rho(g),
\end{align*}
where $r_R$ is as in \eqref{e2.12}, $\gamma_{h,h'}$ is the arc-lengthed geodesic connecting $h$ and $h'$, $\varrho_N$ is as in \eqref{erho}, and we have used the fact that for $h,h'\in R$ and $\vartheta\in [0,d(h,h')]$, $\gamma_{h,h'}(\vartheta)\in R$. Using Lemma \ref{lem2.2}, if $0<r_R\leq 1$, we write
\begin{align*}
&r_R\int_{(B(\widetilde{h},r_R))^c}\delta^{1/2}(g\widetilde{h})\,\varrho_N(d(g,\widetilde{h}))\,\d \rho(g)\\
=&r_R\int_{(B(e,r_R))^c}\delta^{1/2}(g)\,\varrho_N(R(g))\,\d \rho(g)\\
\leq& C\, r_R\left(\int_{r_R}^2r^{-n-2}r^n\,\d r+\int_2^\infty \e^{-nr/2}r^{-N}r\,\e^{nr/2}\,\d r\right)\leq C,
\end{align*}
while if $r_R>1$, we write
\begin{align*}
r_R\int_{(B(\widetilde{h},r_R))^c}\delta^{1/2}(g\widetilde{h})\,\varrho_N(d(g,\widetilde{h}))\,\d \rho(g)\leq C_N\,r_R\left(\int_{r_B}^\infty \e^{-nr/2}r^{-N}r\,\e^{nr/2}\,\d r\right)\leq C_N.
\end{align*}
From these estimates \eqref{e3.14} is valid.
\end{proof}
}

\smallskip\noindent
\textbf{Acknowledgements.} The authors were supported by National Key R$\&$D Program of China 2022YFA1005700. The authors would like to thank Peng Chen, Xi Chen, Naijia Liu, Detlef M\"{u}ller, Adam Sikora and Hong-Wei Zhang for helpful discussions and suggestions.


\smallskip
\bibliographystyle{plain}

\begin{thebibliography}{99}
\bibitem{As}F. Astengo, The maximal ideal space of a hear algebra on Solvable extensions of $H$-type groups. \textit{Boll. Unione Mat. Ital. A (7)} \textbf{9} (1995), no. 1, 157--165.

\bibitem{Br}W.O. Bray, Aspects of harmonic analysis on real hyperbolic space, in \textit{Fourier analysis (Orono, ME, 1992)}, 77--102, Lecture Notes in Pure and Appl. Math., 157, Dekker, New York.

\bibitem{BrTaVa}T. Bruno, A. Tabacco and M. Vallarino, Endpoint results for Fourier integral operators on noncompact symmetric spaces, in \textit{Landscapes of time-frequency analysis}, 33--58. Appl. Numer. Harmon. Anal.. Birkh\"auser/Springer, Cham, 2019.

\bibitem{Da}E.B. Davies, \textit{Heat kernels and spectral theory}. Cambridge Tracts in Mathematics \textbf{92}, Cambridge Univ. Press, Cambridge, 1990.

\bibitem{Er}A. Erd\'{e}lyi et al., \textit{Higher transcendental functions}. Vol. I, McGraw-Hill, New York, 1953.

\bibitem{Gr2}L. Grafakos, \textit{Classical Fourier analysis}, third edition. Grad. Texts in Math. \textbf{249}. Springer, New York, 2014.

\bibitem{Gr}A. Grigor'yan, \textit{Heat kernel and analysis on manifolds}. AMS/IP Stud. Adv. Math. \textbf{47}. American Mathematical Society, Providence, RI; International Press, Boston, MA, 2009.

\bibitem{GiSj}S. Guilini and P. Sj\"{o}gren, A note on maximal functions on a solvable Lie group. \textit{Arch. Math. (Basel)} \textbf{55} (1990), no. 2, 156--160.

\bibitem{HeSt}W. Hebisch and T. Steger, Multipliers and singular integrals on exponential growth groups. \textit{Math. Z.} \textbf{245} (2003), no. 1, 37--61.

\bibitem{Io}A.D. Ionescu, Fourier integral operators on noncompact symmetric spaces of real rank one. \textit{J. Funct. Anal.} \textbf{174} (2000), no. 2, 274--300.

\bibitem{LiVaYa}L. Liu, M. Vallarino and D. Yang, Dyadic sets, maximal functions and applications on $ax+b$-groups. \textit{Math. Z.} \textbf{270} (2012), no. 1--2, 515--529.

\bibitem{Mi}A. Miyachi, On some estimates for the wave equation in $L^p$ and $H^p$. \textit{J. Fac. Sci. Univ. Tokyo Sect. IA Math.} \textbf{27} (1980), no. 2, 331--354.

\bibitem{MuTh}D. M\"{u}ller and C. Thiele, Wave equation and multiplier estimates on $ax+b$ groups. \textit{Studia Math.} \textbf{179} (2007), no. 2, 117--148.

\bibitem{MuVa}D. M\"{u}ller and M. Vallarino, Wave equation and multiplier estimates on Damek-Ricci spaces. \textit{J. Fourier Anal. Appl.} \textbf{16} (2010), 204--232.

\bibitem{NeSt}E. Nelson and W.F. Stinespring. Representation of elliptic operators in an enveloping algebra. \textit{Amer. J. Math.} \textbf{81} (1959), 547--560.

\bibitem{Pe}J.C. Peral, $L^p$ estimates for the wave equation. \textit{J. Funct. Anal.} \textbf{36} (1980), no. 1, 114--145.

\bibitem{Pe2}P. Peterson, \textit{Riemannian geometry}, third edition. Grad. Texts in Math. \textbf{171}. Springer, Cham, 2016.

\bibitem{SjVa}P. Sj\"ogren and M. Vallarino, Boundedness from $H^1$ to $L^1$ of Riesz transforms on a Lie group of exponential growth. \textit{Ann. Inst. Fourier (Grenoble)} \textbf{58} (2008), no. 4, 1117--1151.

\bibitem{So}C.D. Sogge, \textit{Fourier integrals in classical analysis}, second edition. Cambridge Univ. Press, Cambridge, 2017.

\bibitem{St}E.M. Stein, \textit{Harmonic analysis: Real variable methods, orthogonality and oscillatory integrals}. Princeton Math. Ser. \textbf{43}. Monographs in Harmonic Analysis \textbf{III}. Princeton Univ. Press, Princeton, NJ, 1993.

\bibitem{Ta2}T. Tao, The weak-type $(1,1)$ of Fourier integral operators of order $-(n-1)/2$. \textit{J. Aust. Math. Soc.} \textbf{76} (2004), no. 1, 1--24. 

\bibitem{Ta1}D. Tataru, Strichartz estimates in the hyperbolic spaces and global existence for the semilinear wave equation. \textit{Trans. Amer. Math. Soc.} \textbf{353} (2000), no. 2, 795--807.

\bibitem{Va}M. Vallarino, Spaces $H^1$ and $\BMO$ on $ax+b$ groups. \textit{Collect. Math.} \textbf{60} (2009), no. 3, 277--295.
\end{thebibliography}

\end{document}